\newtheorem{theorem}{Theorem}
\newtheorem{corollary}{Corollary}[theorem]
\newtheorem{lemma}[theorem]{Lemma}
\newtheorem*{definition}{Definition}
\newtheorem{defn}{Definition}%
\newtheorem*{assumption}{Assumption}
\DeclareMathOperator{\RE}{Re}
\begin{document}
\title{Second-Order Toeplitz Determinant for Quasi-Convex Mappings}
\author{Surya Giri$^{*}$  }


\date{}


	

\maketitle	

\begin{abstract}
    \noindent  This paper presents sharp estimates for the second-order Toeplitz determinant whose entries are the coefficients of convex functions defined on the unit disk in $\mathbb{C}$. These estimates are further extended to a subclass of holomorphic mappings defined on the  unit ball in a complex Banach space and on the unit polydisk in $\mathbb{C}^n$, which, as special cases, yield bounds for the classes of quasi-convex mappings of type $B$.
\end{abstract}
\vspace{0.5cm}
	\noindent \textit{Keywords:} Toeplitz determinants; Convex functions; Quasi-Convex mappings; Coefficient problems;  Several complex variables.\\
	\noindent \textit{AMS Subject Classification:} 30C45, 32H02.
\section{Introduction}
   Suppose that $\mathcal{A}$ represents the collection of all analytic functions defined on the open unit disk $\mathbb{U}$ having the series expansion
\begin{equation}\label{one}
    f(z)= z + \sum_{n=2}^\infty a_n z^n, \quad z \in \mathbb{U}.
\end{equation}
    Let $\mathcal{S} \subset \mathcal{A}$ comprise all univalent functions. Various subclasses of $\mathcal{S}$ were introduced and studied in view of their geometric significance. One well studied subclass of $\mathcal{S}$ is the class of convex functions, denoted by $\mathcal{C}$. Let $f \in \mathcal{S}$. Then $f$ is said to be convex if $f(\mathbb{U})$ is a convex domain. Analytically, $f\in \mathcal{C}$ if and only if
    $$ \RE \Big(1 + \frac{z f''(z)}{f'(z)} \Big) >0 , \quad z\in \mathbb{U}. $$
    A function $f\in \mathcal{A}$ is said to be subordinate to another function $F\in \mathcal{A}$ if there exists a Schwarz function $\omega$ such that $f(z)=F(\omega(z))$. In notation, we write $f\prec F$. Let $\mathcal{B}_0$ denote the class of all Schwarz functions. Using the concept of subordination, Ma and Minda~\cite{MaMi} introduced the class
    $$ \mathcal{C}(\Psi) = \bigg\{ f\in \mathcal{A}: 1 + \frac{z f''(z)}{f'(z)} \prec \Psi(z) \bigg\}, $$
    where $\Psi : \mathbb{U} \rightarrow \mathbb{C}$ is an analytic univalent functions, such that $\RE \Psi(z)>0$, $\Psi(0)=1$, $\Psi'(0)>0$, $\Psi(\mathbb{U})$ is starlike with respect to $1$ as well as symmetric about the real axis. For specific choices of $\Psi$, the class $\mathcal{C}(\Psi)$ reduces to well-known subclasses of $\mathcal{C}$. For instance, choosing
    $$ \Psi(z)= \frac{1+z}{1-z}, \;\; \Psi(z)= \frac{1+(1-2\alpha)z}{1-z}\;\; \text{and}\;\; \Psi(z)=\Big(\frac{1+z}{1-z}\Big)^\beta$$
    yields the classes of convex functions, convex functions of order $\alpha$ (denoted by $\mathcal{C}(\alpha)$), and strongly convex functions of order $\beta$ (denoted by $\mathcal{CC}(\beta)$), respectively, where $\alpha, \beta \in [0,1]$.

    Ye and Lim~\cite{YeLim} showed that any $n \times n$ matrix can be expressed as a product of Toeplitz matrices.
    Toeplitz matrices arise in  algebra~\cite{Ber}, image processing ~\cite{Coc}, and other areas. For more applications, see~\cite{YeLim}. Ali et al.~\cite{AliThoVas} considered the Toeplitz determinants for $f(z)=z+\sum_{n=2}^\infty a_n z^n \in \mathcal{A}$ given by
\begin{equation*}
     T_{m,n}(f)= \begin{vmatrix}
	a_n & a_{n+1} & \cdots & a_{n+m-1} \\
	a_{n+1} & a_n & \cdots & a_{n+m-2}\\
	\vdots & \vdots & \ddots & \vdots\\
    a_{n+m-1} & a_{n+m-2} & \cdots & a_n\\
	\end{vmatrix}.
\end{equation*}
   Particularly, they derived the sharp estimates of
\begin{equation*}\label{T31}
    \vert T_{3,1}(f)\vert =
      \vert a_2^{2} {a}_3- 2 a_2^2-  a_3^2+1\vert \;\; \text{and} \;\; \vert T_{2,n}(f)\vert= \vert a_n^2 -a_{n+1}^2 \vert
\end{equation*}
  when $f \in \mathcal{C}$ and for certain other subclasses of $\mathcal{S}.$ Ahuja et al.~\cite{AhuKhaRav} investigated the same problem for the class $\mathcal{C}(\Psi)$ and established bounds for $\vert T_{2,2}(f)\vert$ and $\vert T_{3,1}(f)\vert$. 
  Many authors contributed to this area in recent years, see~\cite{GirKum,GirKum3,VasLecTho} and references cited therein. For functions $f\in \mathcal{C}(\Psi)$, the problem of estimating
\begin{equation}\label{Tplz}
    \vert T_{2,3}(f)\vert  = \vert a_3^2 - a_4^2 \vert
\end{equation}
   was open.  In section \ref{1st}, we provide the sharp estimate for $ \vert T_{2,3}(f)\vert$ when $f\in\mathcal{C}(\Psi)$.

   Furthermore, these bounds are extended from one dimension to higher dimensions for a subclass of holomorphic mappings defined on the unit ball of a complex Banach space and on the unit polydisk in $\mathbb{C}^n$. Let $\mathbb{B}$ be the unit ball in a complex Banach space $X$ equipped with a norm $\|\cdot\|$ and $\mathbb{U}^n$ denote the unit poly disk in $\mathbb{C}^n$. Also, let $\partial \mathbb{U}^n$ and $\partial_0 \mathbb{U}^n$ represent the boundary and distinguished boundary of $\mathbb{U}^n$, respectively.

   Let $\mathcal{H}(\mathbb{B})$ be the set of all holomorphic mappings from $\mathbb{B}$ into $X$.
   If $f\in \mathcal{H}(\mathbb{B})$, then for each integer $k \geq 1$, there exists a bounded symmetric linear mapping
    $$D^k f(z) : \prod_{j=1}^k X \rightarrow X, $$
    called the $k^{th}$ order Fr\'{e}chet derivative of $f$ at $z$. Moreover,  for every $w$ in some neighborhood of $z$, the mapping $f$ can be expressed as
    $$ f(w) = \sum_{k=0}^\infty \frac{1}{k!} D^k f(z) ((w -z)^k ),$$
   where $ D^0 f(z) ((w -z)^0 )= f(z)$
    and for $k \geq 1$,
    $$ D^k f(z)( (w -z)^k) = D^k f(z) \underbrace{( w-z, w-z, \cdots, w-z) }_\text{ k -times}.$$
    On a bounded circular domain $\Omega \subset \mathbb{C}^n$, the first and the $m^{th}$ Fr\'{e}chet derivative of a holomorphic mapping $f : \Omega \rightarrow X$  are written by
    $ D f(z)$ and $D^m f(z) (a^{m-1},\cdot)$, respectively. The matrix representations are
\begin{align*}
    D f(z) &= \bigg(\frac{\partial f_j}{\partial z_k} \bigg)_{1 \leq j, k \leq n}, \\
    D^m f(z)(a^{m-1}, \cdot) &= \bigg( \sum_{p_1,p_2, \cdots, p_{m-1}=1}^n  \frac{ \partial^m f_j (z)}{\partial z_k \partial z_{p_1} \cdots \partial z_{p_{m-1}}} a_{p_1} \cdots a_{p_{m-1}}   \bigg)_{1 \leq j,k \leq n},
\end{align*}
   where $f(z) = (f_1(z), f_2(z), \cdots f_n(z))'$ and $ a= (a_1, a_2, \cdots a_n)'\in \mathbb{C}^n.$

     A mapping $f: \mathbb{B} \rightarrow X$ is said to be biholomorphic if its inverse exists and holomorphic on $f(\mathbb{B})$. A mapping $f\in \mathcal{H}(\mathbb{B})$ is called locally biholomorphic if, for every $z\in \mathbb{B}$, the Fr\'{e}chet derivative $Df(z)$ is invertible and its inverse is bounded. Analogous to the one-dimensional case, normalization for $f$ is defined by the conditions $f(0)=0$ and $Df(0)=I$,  where $I$ denotes the identity operator on $X$. For each $z\in X\setminus\{0\}$, consider the set
    $$ T_z = \left\{ l_z \in L(X,\mathbb{C}) : l_z(z) = \| z\|, \| l_z \| = 1 \right\},$$
    where $L(X, Y)$ denotes the set of continuous linear operators from $X$ into a complex Banach space $Y$.  By the Hahn-Banach theorem, the set $T_z$ is non-empty.

   Liu and Liu \cite{Liu} introduced the following class:
\begin{defn}\cite{Liu}\label{defn1C}
    Suppose $\alpha \in [0,1)$ and $f: \mathbb{B} \rightarrow X$ be a normalized locally biholomorphic mapping. If
    $$ \RE \left\{l_z [(D f(z))^{-1} ( D^2 f(z) (z^2) + D f(z) (z) )]  \right\} \geq \alpha \| z \|, \quad  l_z \in T_z, \;z \in \mathbb{B}\setminus \{0 \},$$
    then $f$ is called a quasi convex mapping of type $B$ and order $\alpha$ on $\mathbb{B}$.

    If $\mathbb{B} = \mathbb{U}^n$ and $X = \mathbb{C}^n$, then the above condition reduces to
     $$ \left\vert \frac{q_k(z)}{z_k} - \frac{1}{2 \alpha} \right\vert < \frac{1}{2\alpha}, \quad \forall z \in \mathbb{U}^n \setminus \{0 \},$$
   where $$q(z) = ( q_1(z), q_2(z), \cdots , q_n(z))' = (D f(z))^{-1} ( D^2 f(z) (z^2) + D f(z) (z) )$$
   is a column vector in $\mathbb{C}^n$ and $k$ satisfies $$\vert z_k \vert = \| z \| = \max_{1 \leq j \leq n} \{ \vert z_j \vert \}.$$
   In the case $\mathbb{B} = \mathbb{U}$ and $X = \mathbb{C}$,  the condition is equivalent to
   $$ \RE \bigg( 1 + \frac{ z f''(z)}{f'(z)} \bigg) > \alpha, \quad z \in \mathbb{U}. $$
\end{defn}
    We denote by  $\mathcal{C}_\alpha(\mathbb{B})$ the class of quasi convex mappings of type $B$ and order $\alpha.$ When $\alpha =0$, Definition \ref{defn1C} coincides with the definition of quasi convex mapping of type $B$, denoted by $\mathcal{C}(\mathbb{B})$,  introduced by Roper and Suffridge \cite{Rope}.

\begin{definition}
      For  a biholomorphic function $\Psi: \mathbb{U} \rightarrow \mathbb{C}$ satisfying $\RE \Psi(z) >0$, $\Psi(0)=1$, $\Psi'(0)>0$ and $\Psi''(0) \in \mathbb{R}$. Let $\mathcal{M}_\Psi$, Graham et al.~\cite{GraHamKoh} introduced the class
     $$ \mathcal{M}_\Psi = \bigg\{ p \in \mathcal{H}(\mathbb{B}) : p(0)= Dp(0)=I, \; \frac{l_z (p(z))}{\|z\|} \in \Psi(\mathbb{U}),\; z\in \mathbb{B}\setminus\{0\}, l_z \in T_z \bigg\}.  $$
     In case of $\mathbb{B} = \mathbb{U}^n$ and $X = \mathbb{C}^n$, we have
     $$ \mathcal{M}_\Psi = \bigg\{ p \in \mathcal{H}(\mathbb{B}) : p(0)= Dp(0)=I, \; \frac{ p_j (z)}{z_j} \in \Psi(\mathbb{U}), \; z\in \mathbb{U}^n\setminus\{0\} \bigg\},  $$
     where $p(z) = (p_1(z), p_2(z), \cdots, p_n(z))'$ is a column vector in $\mathbb{C}^n$ and $j$ satisfies $$\vert z_j \vert = \| z\| = \max_{1 \leq k \leq n}\{ \vert z_k \vert \}.$$
     For $\mathbb{B} = \mathbb{U}$ and $X = \mathbb{C}$,  the relation is equivalent to
    $$\mathcal{M}_\Psi = \left\{ p \in \mathcal{H}(\mathbb{U}) : p(0) =0, p'(0)=1, \frac{p(z)}{z } \in \Psi(\mathbb{U}), z \in \mathbb{U} \right\}.$$
\end{definition}

    In order to establish the main results, we impose additional geometric constraints on the function $\Psi$.
\begin{assumption}
   In addition to the existing properties, we assume that the biholomorphic function $\Psi$ satisfies $\Psi'(0)>0$, $\Psi(\mathbb{U})$ is starlike with respect to $1$ and symmetric about the real axis in the right half plane.
\end{assumption}

    Extending the Fekete-Szeg\"{o} inequality from one dimension to higher dimensions, Xu et al.~\cite{XuC} established sharp estimate for the Fekete-Szeg\"{o} functional for the classes $\mathcal{C}_\alpha(\mathbb{B})$ and  $\mathcal{C}(\mathbb{B})$, defined on the unit ball in $X$ and on the unit polydisk in $\mathbb{C}^n$. This work was further generalized for the mappings having a zero of order $k$ at $z=0$, where $k\in \mathbb{N}$~\cite{XuJia}.

    Considering the Toeplitz determinants, Giri and Kumar~\cite{GirKum2} derived sharp bounds of $\vert T_{2,2}(f)\vert$ and $\vert T_{3,1}(f)\vert$ for the class $\mathcal{C}_\alpha(\mathbb{B})$; however, the case $\vert T_{2,3}(f)\vert$ remained unaddressed. Focusing on this problem, in section~\ref{Sec3}, we obtain sharp estimates of $\vert T_{2,3}(f)\vert$ for a class of holomorphic mappings on the unit ball in $X$ and on the unit polydisk in $\mathbb{C}^n$. As a consequence, these results yield bounds for the classes $\mathcal{C}_\alpha(\mathbb{B})$ and  $\mathcal{C}(\mathbb{B})$, thereby  extending the study of Toeplitz determinants for convex mappings from one dimension to higher dimensions.
\section{Preliminary Lemmas}

    We require the following lemmas to establish the main results.

\begin{lemma}\cite{Efr}\label{lemma1}
     If $\omega(z)= \sum_{n=1}^\infty c_n z^n \in \mathcal{B}_0$, then for any $\lambda \in \mathbb{C}$,
     $$ \vert c_2 + \lambda c_1^2 \vert \leq \max \{1, \vert \lambda\vert \}. $$
\end{lemma}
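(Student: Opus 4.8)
The plan is to reduce the estimate to two classical sharp bounds on the Taylor coefficients of a Schwarz function, namely $|c_1| \leq 1$ and $|c_2| \leq 1 - |c_1|^2$, and then to combine them with the triangle inequality followed by an elementary case split on the size of $|\lambda|$.

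First I would unpack the hypothesis: $\omega \in \mathcal{B}_0$ means $\omega$ is holomorphic on $\mathbb{U}$ with $\omega(0) = 0$ and $\omega(\mathbb{U}) \subseteq \mathbb{U}$, so the Schwarz lemma gives $|\omega(z)| \leq |z|$ throughout $\mathbb{U}$. Consequently the auxiliary function $g(z) = \omega(z)/z = c_1 + c_2 z + c_3 z^2 + \cdots$ extends holomorphically across $z = 0$ and is a holomorphic self-map of $\overline{\mathbb{U}}$ with $g(0) = c_1$ and $g'(0) = c_2$. Applying the Schwarz--Pick lemma to $g$ (valid in the degenerate case $|c_1| = 1$ as well, where $g$ reduces to a unimodular constant and $c_2 = 0$) yields $|g'(0)| \leq 1 - |g(0)|^2$, that is,
$$ |c_1| \leq 1 \quad \text{and} \quad |c_2| \leq 1 - |c_1|^2. $$

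With these in hand, I would estimate
$$ |c_2 + \lambda c_1^2| \leq |c_2| + |\lambda|\,|c_1|^2 \leq (1 - |c_1|^2) + |\lambda|\,|c_1|^2 = 1 + (|\lambda| - 1)\,|c_1|^2, $$
and then split into two cases. If $|\lambda| \leq 1$, the factor $(|\lambda| - 1)$ is nonpositive, so the right-hand side is at most $1$; if $|\lambda| > 1$, then since $|c_1|^2 \leq 1$ the right-hand side is at most $1 + (|\lambda| - 1) = |\lambda|$. In both cases the bound $\max\{1, |\lambda|\}$ follows, completing the argument.

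The only genuine obstacle is securing the sharp second-coefficient bound $|c_2| \leq 1 - |c_1|^2$: the naive Schwarz lemma alone gives merely $|c_2| \leq 1$, and the $H^2$-norm estimate $|c_1|^2 + |c_2|^2 \leq 1$ only yields $|c_2| \leq \sqrt{1 - |c_1|^2}$, both of which are too weak to push the triangle-inequality argument through. The Schwarz--Pick refinement applied to $g = \omega/z$ is therefore the essential ingredient; everything after it is a routine case analysis.
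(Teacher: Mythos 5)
Your argument is correct: the Schwarz--Pick bound $|c_2|\leq 1-|c_1|^2$ for $g=\omega/z$, followed by the triangle inequality and the case split on $|\lambda|$, is exactly the standard proof of this classical inequality (often attributed to Keogh--Merkes), and you handle the degenerate case $|c_1|=1$ properly. The paper itself gives no proof --- Lemma~\ref{lemma1} is simply quoted from the cited reference --- so there is nothing to contrast with; your write-up supplies precisely the argument the citation stands in for.
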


\begin{lemma}\cite{ProSzy}\label{lemma2}
    If $\omega(z) = \sum_{n=1}^\infty c_n z^n \in \mathcal{B}_0$ and $(\nu_1 , \nu_2) \in \cup_{i=1}^7 \Theta_i$, then
    $$ \vert c_3 + \nu_1 c_1 c_2 +\nu_2 c_1^3 \vert \leq \vert \nu_2 \vert ,$$
    where
\begin{align*}
  \Theta_1 &= \left\{ (\nu_1 , \nu_2) : \vert \nu_1 \vert \leq \frac{1}{2}, \vert \nu_2\vert \leq 1 \right\},  \\
   \Theta_2 &= \left\{ (\nu_1 , \nu_2) : \frac{1}{2} \leq \vert \nu_1\vert \leq 2,\; \frac{4}{27} (\vert \nu_1\vert + 1)^3 - (\vert \nu_1\vert + 1) \leq \nu_2 \leq 1 \right\}, \\
   \Theta_3 &= \left\{ (\nu_1 , \nu_2) : \vert \nu_1\vert \leq \frac{1}{2}, \nu_2 \leq -1 \right\}, \;\; \Theta_4 = \left\{ (\nu_1 , \nu_2) : \vert \nu_1\vert \geq \frac{1}{2}, \nu_2 \leq -\frac{2}{3} (\vert \nu_1\vert + 1) \right\}, \\
     \Theta_5 &= \bigg\{ (\nu_1, \nu_2) : \vert \nu_1 \vert \leq 2, \; \nu_2 \geq 1 \bigg\}, \;\;\Theta_6 = \bigg\{ (\nu_1, \nu_2) : 2 \leq \vert \nu_1 \vert \leq 4, \; \nu_2 \geq \frac{1}{12} (\nu_1^2 + 8) \bigg\},
\end{align*}
  and
   $$   \Theta_7 = \bigg\{ (\nu_1, \nu_2) : \vert \nu_1 \vert \geq 4, \; \nu_2 \geq \frac{2}{3} (\vert \nu_1\vert - 1) \bigg\}. $$
\end{lemma}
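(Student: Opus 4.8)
The plan is to turn the coefficient functional into a real optimization problem over a compact parameter set by means of the Schur representation of the coefficients of a Schwarz function. For $\omega\in\mathcal{B}_0$ one has $|c_1|\le 1$ together with the classical relations $c_2 = (1-|c_1|^2)\,\zeta$ and $c_3 = (1-|c_1|^2)\big[(1-|\zeta|^2)\,\eta - \bar{c_1}\,\zeta^2\big]$, where $\zeta,\eta$ are arbitrary points of the closed unit disk (these follow from the Schur algorithm applied to $\omega$, or equivalently by passing to the associated Carath\'eodory function). Substituting these into $c_3+\nu_1 c_1 c_2+\nu_2 c_1^3$ expresses the functional as a polynomial in $c_1,\zeta,\eta$ in which $\eta$ occurs only through the single linear term $(1-|c_1|^2)(1-|\zeta|^2)\,\eta$. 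Since $|\eta|\le1$, the triangle inequality lets me eliminate $\eta$ at the cost of passing to an upper bound, leaving a quantity depending on $c_1$ and $\zeta$ alone.

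Next I would normalise the phases. Writing $t=|c_1|\in[0,1]$ and $s=|\zeta|\in[0,1]$ and choosing the arguments of $c_1$ and $\zeta$ so that the surviving terms add constructively, the problem collapses to maximising a real-valued function of $(t,s)\in[0,1]^2$ whose coefficients are built from $\nu_1,\nu_2$. Because the $\eta$-term contributes $(1-t^2)(1-s^2)$ while the remaining terms contribute low-degree monomials in $t$ and $s$, this is a low-degree polynomial on the unit square, and the whole estimate reduces to showing that its maximum equals $|\nu_2|$ exactly when $(\nu_1,\nu_2)\in\bigcup_{i=1}^7\Theta_i$. The value $|\nu_2|$ is precisely the value produced by the boundary configuration $t=1$, i.e.\ $\omega(z)=z$ (for which $c_2=c_3=0$), so the content of the lemma is that this corner of the square is the global maximiser throughout each region $\Theta_i$, which also exhibits the sharpness.

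The main obstacle is the case analysis. Maximising a two-variable polynomial over $[0,1]^2$ requires inspecting the interior critical points as well as all four edges and four corners, and the location of the maximiser migrates as $(\nu_1,\nu_2)$ moves through parameter space; this migration is exactly what forces the partition into the seven sets $\Theta_1,\dots,\Theta_7$. I would treat the regions in groups according to the sign of $\nu_2$ and the size of $|\nu_1|$, noting that the thresholds $\tfrac12,2,4$ appearing in the definitions of the $\Theta_i$ arise as the values of $|\nu_1|$ at which an edge critical point enters or leaves the square; in each group a first-derivative (monotonicity) argument shows that the candidate value $|\nu_2|$ dominates. Carrying out this critical-point bookkeeping uniformly across all seven regions, rather than any single algebraic identity, is the delicate part of the argument; once it is complete, the desired bound $|c_3+\nu_1 c_1 c_2+\nu_2 c_1^3|\le|\nu_2|$ follows directly from the chain of inequalities assembled in the reduction.
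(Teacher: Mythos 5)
This lemma is quoted from Prokhorov and Szynal \cite{ProSzy} and the paper gives no proof of it, so there is no in-paper argument to compare against; I can only assess your proposal on its own terms. Your starting point is sound: the Schur-type parametrization $c_2=(1-|c_1|^2)\zeta$, $c_3=(1-|c_1|^2)\big[(1-|\zeta|^2)\eta-\overline{c_1}\,\zeta^2\big]$ with $|\zeta|,|\eta|\le 1$ is correct, eliminating $\eta$ by the triangle inequality is legitimate, and the extremal value $|\nu_2|$ is indeed produced by $\omega(z)=z$. This is also the general strategy of the cited source.

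The gap is in the phase normalization. After removing $\eta$ you are left with $\big|-(1-t^2)\overline{c_1}\,\zeta^2+\nu_1(1-t^2)c_1\zeta+\nu_2c_1^3\big|+(1-t^2)(1-s^2)$, and with $c_1=te^{i\theta}$, $\zeta=se^{i\phi}$ the three terms carry the phases $2\phi-\theta+\pi$, $\theta+\phi+\arg\nu_1$ and $3\theta+\arg\nu_2$. These cannot in general be aligned simultaneously: already for $\nu_1,\nu_2>0$ the system $\phi=2\theta$, $2\phi-\theta+\pi\equiv 3\theta$ is inconsistent. So ``choosing the arguments so that the terms add constructively'' silently replaces the true maximum by the sum of moduli, which depends only on $(|\nu_1|,|\nu_2|)$. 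That replacement is both structurally wrong and quantitatively too weak. Structurally, $\bigcup_{i=1}^7\Theta_i$ is not symmetric under $\nu_2\mapsto-\nu_2$ (for instance $(2,1)\in\Theta_2\cap\Theta_5$ while $(2,-1)$ lies in no $\Theta_i$), so no bound depending only on the moduli of $\nu_1,\nu_2$ can reproduce the statement. Quantitatively, at $(\nu_1,\nu_2)=(2,1)$ your majorant $(1-t^2)(1-s^2)+(1-t^2)ts^2+2(1-t^2)ts+t^3$ reduces on the edge $s=1$ to $3t-2t^3$, which attains the value $\sqrt{2}>1=|\nu_2|$ at $t=1/\sqrt{2}$; hence the two-variable optimization you describe cannot close the argument on $\Theta_2$, $\Theta_5$, $\Theta_6$ or $\Theta_7$. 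A correct proof must retain at least one relative phase as a genuine optimization variable and carry out the resulting three-variable trigonometric case analysis, which is precisely the lengthy computation done in \cite{ProSzy}.
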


\begin{lemma}\label{lm3}
    Let $g\in \mathcal{H}(\mathbb{U})$ be a biholomorphic function such that $g(0)=\Psi(0)$ and $g(\mathbb{U})\subset \Psi(\mathbb{U})$. If $( r_1, r_2) \in \bigcup_{i=1}^7\Theta_i$, then
    $$ \Big\vert 6 (g'(0))^3+9 g'(0) g''(0)+ 2 g'''(0) \Big \vert \leq \Big\vert 6 (\Psi'(0))^3 + 9 \Psi'(0) \Psi''(0) + 2 \Psi'''(0) \Big\vert, $$
    where
    $$ r_1 = \frac{3 (\Psi'(0))^2 +  2 \Psi''(0)}{2 \Psi'(0)} \;\; \text{and}\;\; r_2 = \frac{6 (\Psi'(0))^3 + 9 \Psi'(0) \Psi''(0) + 2 \Psi'''(0)}{12 \Psi'(0)} .$$
\end{lemma}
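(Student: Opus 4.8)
The plan is to realize $g$ as a subordinate of $\Psi$ and then recast the stated functional into the quantity controlled by Lemma~\ref{lemma2}. Since $\Psi$ is biholomorphic on $\mathbb{U}$, its inverse is well defined on $\Psi(\mathbb{U})$, and the hypotheses $g(0)=\Psi(0)$, $g(\mathbb{U})\subset\Psi(\mathbb{U})$ guarantee that $\omega:=\Psi^{-1}\circ g$ maps $\mathbb{U}$ into $\mathbb{U}$ with $\omega(0)=0$; hence $\omega\in\mathcal{B}_0$ and $g=\Psi\circ\omega$. Writing $\omega(z)=\sum_{n=1}^\infty c_n z^n$, I would expand this composition as a power series about the origin.

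First I would extract the first three Taylor coefficients of $g=\Psi\circ\omega$ by the chain rule, obtaining
\begin{align*}
g'(0) &= \Psi'(0)c_1, \\
g''(0) &= 2\Psi'(0)c_2 + \Psi''(0)c_1^2, \\
g'''(0) &= 6\Psi'(0)c_3 + 6\Psi''(0)c_1c_2 + \Psi'''(0)c_1^3.
\end{align*}
Substituting these into $6(g'(0))^3 + 9 g'(0)g''(0) + 2 g'''(0)$ and collecting the terms in $c_3$, $c_1 c_2$ and $c_1^3$, the expression factors as
$$ 12\,\Psi'(0)\left[\,c_3 + r_1 c_1 c_2 + r_2 c_1^3\,\right], $$
with $r_1$ and $r_2$ exactly the quantities defined in the statement. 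This is the key algebraic step: it makes the two separate coefficient combinations of $\Psi$ collapse into a single multiplicative factor $12\Psi'(0)$ together with precisely the parameters $(r_1,r_2)$ demanded by Lemma~\ref{lemma2}.

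Taking absolute values and invoking the hypothesis $(r_1,r_2)\in\bigcup_{i=1}^7\Theta_i$, Lemma~\ref{lemma2} yields $\vert c_3 + r_1 c_1 c_2 + r_2 c_1^3\vert\le\vert r_2\vert$. Since $\Psi'(0)>0$, we have $12\Psi'(0)\vert r_2\vert = \vert 6(\Psi'(0))^3 + 9\Psi'(0)\Psi''(0) + 2\Psi'''(0)\vert$, which is exactly the asserted bound, with equality inherited from the extremal Schwarz functions of Lemma~\ref{lemma2}.

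The routine but delicate part is the third-order chain-rule bookkeeping and checking that the collected coefficients coincide with $12\Psi'(0)r_1$ and $12\Psi'(0)r_2$. The main point requiring care is that Lemma~\ref{lemma2} compares $\nu_2$ against real thresholds, so I must use the Assumption that $\Psi(\mathbb{U})$ is symmetric about the real axis — which forces the Taylor coefficients $\Psi'(0),\Psi''(0),\Psi'''(0)$ to be real — to ensure $r_1,r_2\in\mathbb{R}$, making both the membership $(r_1,r_2)\in\bigcup_{i=1}^7\Theta_i$ and the application of Lemma~\ref{lemma2} meaningful.
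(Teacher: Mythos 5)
Your proposal is correct and follows essentially the same route as the paper: deduce $g\prec\Psi$, expand $g=\Psi\circ\omega$ to get $g'(0)=\Psi'(0)c_1$, $g''(0)=2\Psi'(0)c_2+\Psi''(0)c_1^2$, $g'''(0)=6\Psi'(0)c_3+6\Psi''(0)c_1c_2+\Psi'''(0)c_1^3$, factor the target expression as $12\Psi'(0)\bigl(c_3+r_1c_1c_2+r_2c_1^3\bigr)$, and apply Lemma~\ref{lemma2}. Your added remarks --- constructing $\omega=\Psi^{-1}\circ g$ explicitly and noting that the symmetry assumption forces $r_1,r_2\in\mathbb{R}$ so that membership in $\bigcup_i\Theta_i$ is meaningful --- are correct refinements of details the paper leaves implicit.
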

\begin{proof}
   The premises on $g$ and $\Psi$ lead to $g \prec \Psi$. By the definition of subordination,  there exist a function $\omega(z)=\sum_{n=1}^\infty c_n z^n \in \mathcal{B}_0$ satisfying
   $$ g(z)=  \Psi(\omega(z)). $$
   Comparing the coefficients of like powers of $z$, using the Taylor series expansions for $g$, $\Psi$ and $\omega$, yields
\begin{align*}
     g'(0)&= \Psi'(0) c_1, \quad \frac{g''(0)}{2}=  \Psi'(0) c_2 +\frac{\Psi''(0)}{2} c_1^2
\end{align*}
   and
   $$ \frac{g'''(0)}{6}= \frac{\Psi'''(0)}{6} c_1^3 + \Psi''(0) c_1 c_2 + \Psi'(0)c_3.$$
   In view of these identities, it follows that
\begin{equation}\label{lm2eq1}
   \bigg\vert 6 (g'(0))^3+ 9 g'(0) g''(0) +2 g'''(0) \bigg \vert  = 12\Psi'(0) \Big\vert c_3 + r_1 c_1 c_2 + r_2 c_1^3  \Big\vert.
\end{equation}
   Utilizing Lemma~\ref{lemma2} in (\ref{lm2eq1}), we derive the required result.
\end{proof}
\section{For the class $\mathcal{C}(\Psi)$}\label{1st}
  The following result provide the sharp bound of $\vert T_{2,3}(f)\vert$ when $f\in \mathcal{C}(\Psi)$.
\begin{theorem}
    If $f \in \mathcal{C}(\Psi)$ such that $\vert \Psi''(0)+2 (\Psi'(0))^2 \vert \geq 2 \Psi'(0)$ and $(r_1, r_2) \in \bigcup_{i=1}^7\Theta_i$, then
    $$ \vert T_{2,3}(f)\vert \leq \frac{1}{144}\left( 2 (\Psi'(0))^2 + \Psi''(0) \right)^2+ \frac{1}{576} \bigg((\Psi'(0))^3+ \frac{3 \Psi'(0) \Psi''(0)}{2}+\frac{\Psi'''(0)}{3}  \bigg)^2 ,$$
    where
    $$ r_1 = \frac{1}{2 \Psi'(0)}\bigg( 3 (\Psi'(0))^2+ 2 \Psi''(0) \bigg), \;\; r_2= \frac{1}{2 \Psi'(0)}\bigg(  (\Psi'(0))^3+ \frac{3 \Psi'(0) \Psi''(0)}{2}+\frac{\Psi'''(0)}{3} \bigg). $$
    The estimate is sharp.
\end{theorem}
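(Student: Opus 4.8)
The plan is to push everything down to the Taylor coefficients of the Schwarz function subordinating $1+zf''/f'$ to $\Psi$, and then read off the two required estimates from Lemmas~\ref{lemma1} and~\ref{lemma2}. First I would set $p(z):=1+zf''(z)/f'(z)=1+p_1z+p_2z^2+\cdots$ and compare coefficients in the identity $(p(z)-1)f'(z)=zf''(z)$ to obtain $a_2=p_1/2$, $a_3=(p_2+p_1^2)/6$ and $a_4=(p_1^3+3p_1p_2+2p_3)/24$. Since $f\in\mathcal{C}(\Psi)$ means $p\prec\Psi$, I would write $p=\Psi\circ\omega$ with $\omega(z)=\sum_{n\ge1}c_nz^n\in\mathcal{B}_0$; matching Taylor coefficients reproduces exactly the relations $p_1=\Psi'(0)c_1$, $p_2=\Psi'(0)c_2+\tfrac{\Psi''(0)}{2}c_1^2$ and $p_3=\Psi'(0)c_3+\Psi''(0)c_1c_2+\tfrac{\Psi'''(0)}{6}c_1^3$ already recorded in the proof of Lemma~\ref{lm3}.

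Next I would apply the triangle inequality $|T_{2,3}(f)|=|a_3^2-a_4^2|\le|a_3|^2+|a_4|^2$ and bound the two moduli independently. Substituting the $c_n$-expressions gives $a_3=\tfrac{\Psi'(0)}{6}\bigl(c_2+\lambda c_1^2\bigr)$ with $\lambda=\tfrac{2(\Psi'(0))^2+\Psi''(0)}{2\Psi'(0)}$, and Lemma~\ref{lemma1} yields $|c_2+\lambda c_1^2|\le\max\{1,|\lambda|\}$. Here the hypothesis $|\Psi''(0)+2(\Psi'(0))^2|\ge2\Psi'(0)$ is precisely $|\lambda|\ge1$, so the maximum equals $|\lambda|$ and $|a_3|\le\tfrac{1}{12}|2(\Psi'(0))^2+\Psi''(0)|$. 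For $a_4$ I would note that the same substitution gives $a_4=\tfrac{\Psi'(0)}{12}\bigl(c_3+r_1c_1c_2+r_2c_1^3\bigr)$ with $r_1,r_2$ the quantities named in the statement; since by hypothesis $(r_1,r_2)\in\bigcup_{i=1}^7\Theta_i$, Lemma~\ref{lemma2} (equivalently Lemma~\ref{lm3}) gives $|c_3+r_1c_1c_2+r_2c_1^3|\le|r_2|$ and hence $|a_4|\le\tfrac{1}{24}\bigl|(\Psi'(0))^3+\tfrac{3\Psi'(0)\Psi''(0)}{2}+\tfrac{\Psi'''(0)}{3}\bigr|$. Squaring and summing these two estimates produces the claimed right-hand side verbatim.

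For sharpness the point is to attain equality in the triangle inequality at the same time as in both coefficient bounds, so I would test the Schwarz function $\omega(z)=iz$, i.e.\ the function $f$ determined by $1+zf''(z)/f'(z)=\Psi(iz)$. Then $c_1=i$ and $c_2=c_3=0$, so $a_3$ is a multiple of $c_1^2=-1$ and $a_4$ a multiple of $c_1^3=-i$; consequently $a_3^2\propto c_1^4=1$ while $a_4^2\propto c_1^6=-1$, so the two terms in $a_3^2-a_4^2$ combine with the same sign. Because the symmetry assumption forces $\Psi$ to have real Taylor coefficients, both $2(\Psi'(0))^2+\Psi''(0)$ and $(\Psi'(0))^3+\tfrac{3\Psi'(0)\Psi''(0)}{2}+\tfrac{\Psi'''(0)}{3}$ are real, and $|a_3^2-a_4^2|$ collapses to exactly $\tfrac{1}{144}(2(\Psi'(0))^2+\Psi''(0))^2+\tfrac{1}{576}\bigl((\Psi'(0))^3+\tfrac{3\Psi'(0)\Psi''(0)}{2}+\tfrac{\Psi'''(0)}{3}\bigr)^2$.

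I expect the principal obstacle to be the sharpness step rather than the estimation: the upper bound is essentially forced once the coefficient identities are in hand, but the triangle inequality $|a_3^2-a_4^2|\le|a_3|^2+|a_4|^2$ is lossy, and one must exhibit a single admissible function for which it becomes an equality while both Lemma~\ref{lemma1} and Lemma~\ref{lemma2} are simultaneously saturated. The phase choice $c_1=i$ is what reconciles these demands, since $c_2=c_3=0$ keeps $|c_2+\lambda c_1^2|=|\lambda|$ and $|c_3+r_1c_1c_2+r_2c_1^3|=|r_2|$ while the fourth and sixth powers of $i$ align $a_3^2$ with $-a_4^2$. A minor point worth verifying is that the $f$ defined by $1+zf''/f'=\Psi(iz)$ indeed lies in $\mathcal{C}(\Psi)$, which is immediate because $z\mapsto iz$ is a Schwarz function.
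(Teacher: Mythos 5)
Your proposal is correct and follows essentially the same route as the paper: the coefficient identities for $a_3$ and $a_4$ in terms of the Schwarz function, the bound $|a_3^2-a_4^2|\le|a_3|^2+|a_4|^2$ combined with Lemma~\ref{lemma1} and Lemma~\ref{lemma2}, and the extremal function generated by $\omega(z)=iz$ are exactly the paper's argument. Your explicit observation that the phase $c_1=i$ makes $a_3^2$ and $-a_4^2$ align in sign (using that $\Psi$ has real coefficients) is the right justification for sharpness, and is in fact spelled out more carefully than in the paper itself.
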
\label{thmC}
\begin{proof}
     Let $f\in \mathcal{C}(\Psi)$ be of the form (\ref{one}), then there exists a Schwarz function $\omega(z) =\sum_{n=1}^\infty c_n z^n \in \mathcal{B}_0$ such that
     $$ 1+ \frac{z f''(z)}{f'(z)} = \Psi(\omega(z)). $$
     Comparison of same powers of $z$ after Taylor series expansions of $f$, $\Psi$ and $\omega$ yields
\begin{equation}\label{eq1}
    a_3 =  \frac{\Psi'(0)}{6} \bigg( c_2 + \bigg( \Psi'(0)+ \frac{\Psi''(0)}{2 \Psi'(0)} \bigg)c_1^2 \bigg) \;\;\;\;
\end{equation}
    and
\begin{equation}\label{eq2}
    a_4=\frac{\Psi'(0)}{12}  \left( c_3 + r_1 c_1 c_2 + r_2  c_1^3  \right) .
\end{equation}
      According to the premises, $\Psi$ satisfies $\vert \Psi''(0)+2 (\Psi'(0))^2 \vert \geq 2 \Psi'(0)$ and $(r_1, r_2) \in \bigcup_{i=1}^7\Theta_i$. Consequently, by applying Lemma~\ref{lemma1} and Lemma~\ref{lemma2} to (\ref{eq1}) and (\ref{eq2}), respectively, we obtain
\begin{equation}\label{eq3}
     \vert a_3 \vert \leq \frac{2 (\Psi'(0))^2 + \Psi''(0)}{12}  \;\; \text{and} \;\; \vert a_4 \vert \leq \frac{1}{24} \bigg((\Psi'(0))^3+ \frac{3 \Psi'(0) \Psi''(0)}{2}+\frac{\Psi'''(0)}{3}  \bigg).
\end{equation}
     From (\ref{Tplz}), it follows that
\begin{equation}\label{FnTplz}
     \left\vert T_{2,3}(f) \right\vert  \leq \vert a_2\vert^2 + \vert a_3\vert^2.
\end{equation}
    Using the estimates for $\vert a_2 \vert$ and $\vert a_3 \vert$ from (\ref{eq3}) in (\ref{FnTplz}), we get
\begin{equation}\label{extrem}
\left\vert T_{2,3}(f) \right\vert \leq \frac{( 2 (\Psi'(0))^2 + \Psi''(0) )^2}{144}+ \frac{1}{576} \bigg((\Psi'(0))^3+ \frac{3 \Psi'(0) \Psi''(0)}{2}+\frac{\Psi'''(0)}{3}  \bigg)^2.
\end{equation}

    To show the sharpness, we consider the function $f_\Psi : \mathbb{U}\rightarrow \mathbb{C}$, given by
\begin{equation*}
    1 + \frac{f''_\Psi(z)}{f'_\Psi(z)} =   \Psi(i z).
\end{equation*}
    Clearly $f_\Psi \in \mathcal{C}(\Psi)$ and  for this function, we have
    $$ a_3 = -\frac{1}{6}\bigg( (\Psi'(0))^2 + \frac{\Psi''(0)}{2} \bigg)\;\; \text{and}\;\; a_4 =- \frac{i}{24}\bigg( (\Psi'(0))^3 + \frac{3 \Psi'(0)\Psi''(0)}{2}+ \frac{\Psi'''(0)}{3} \bigg),$$
     which together with (\ref{Tplz}) shows that equality case holds in (\ref{extrem}) for the function $f_\Psi$.
\end{proof}
   For $\Psi(z)=(1+z)/(1-z)$, $\Psi(z)=(1+(1-2\alpha)z)/(1-z)$ and $\Psi(z)=((1+z)/(1-z))^\beta$, the class $\mathcal{C}(\Psi)$ corresponds to the classes $\mathcal{C}$, $\mathcal{C}(\alpha)$ and $\mathcal{CC}(\beta)$, respectively. Consequenlty, the following results are obtained as immediate consequences of Theorem~\ref{thmC} for these subclasses.
\begin{corollary}\label{crl1}
    If $f\in \mathcal{C}$, then $\vert T_{2,3}(f)\vert \leq 2.$
\end{corollary}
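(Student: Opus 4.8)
The plan is to obtain Corollary~\ref{crl1} as a direct specialization of Theorem~\ref{thmC} to the generator $\Psi(z)=(1+z)/(1-z)$, for which $\mathcal{C}(\Psi)=\mathcal{C}$. First I would record the Taylor data of $\Psi$ at the origin. Expanding $\Psi(z)=(1+z)\sum_{k\ge 0}z^k = 1+2z+2z^2+2z^3+\cdots$ and matching with $\Psi(z)=1+\Psi'(0)z+\tfrac{1}{2}\Psi''(0)z^2+\tfrac{1}{6}\Psi'''(0)z^3+\cdots$ gives $\Psi'(0)=2$, $\Psi''(0)=4$, and $\Psi'''(0)=12$.

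Second, I would verify that the two hypotheses of Theorem~\ref{thmC} are met for this $\Psi$. The inequality $\vert\Psi''(0)+2(\Psi'(0))^2\vert\ge 2\Psi'(0)$ reads $\vert 4+8\vert=12\ge 4$, which holds. For the admissibility condition $(r_1,r_2)\in\bigcup_{i=1}^7\Theta_i$, I would compute $r_1=\frac{1}{2\Psi'(0)}\bigl(3(\Psi'(0))^2+2\Psi''(0)\bigr)=\tfrac14(12+8)=5$ and $r_2=\frac{1}{2\Psi'(0)}\bigl((\Psi'(0))^3+\tfrac32\Psi'(0)\Psi''(0)+\tfrac13\Psi'''(0)\bigr)=\tfrac14(8+12+4)=6$, and then locate the pair $(5,6)$ inside $\Theta_7$: indeed $\vert r_1\vert=5\ge 4$ and $r_2=6\ge\tfrac23(5-1)=\tfrac83$.

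Third, with both hypotheses in force, I would substitute the Taylor data into the bound supplied by Theorem~\ref{thmC}. The first term evaluates to $\frac{1}{144}\bigl(2\cdot 4+4\bigr)^2=\frac{144}{144}=1$, and the second term evaluates to $\frac{1}{576}\bigl(8+12+4\bigr)^2=\frac{576}{576}=1$, so the two contributions sum to $2$, yielding $\vert T_{2,3}(f)\vert\le 2$ for $f\in\mathcal{C}$.

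The computation presents no genuine obstacle; the only step demanding care is identifying which of the seven regions $\Theta_i$ actually contains the pair $(r_1,r_2)=(5,6)$, since the $\Theta_i$ are cut out by several overlapping inequalities and one must select the applicable region ($\Theta_7$) and check its defining inequalities precisely. Sharpness need not be re-established here, as it is already furnished by the extremal map $f_\Psi$ from the proof of Theorem~\ref{thmC} specialized to this $\Psi$.
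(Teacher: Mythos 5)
Your proposal is correct and is exactly the route the paper takes: the corollary is stated there as an immediate consequence of Theorem~\ref{thmC} with $\Psi(z)=(1+z)/(1-z)$, and your explicit verification of the Taylor data $\Psi'(0)=2$, $\Psi''(0)=4$, $\Psi'''(0)=12$, the hypothesis $\lvert 4+8\rvert\ge 4$, the membership $(r_1,r_2)=(5,6)\in\Theta_7$, and the evaluation $1+1=2$ simply supplies the details the paper leaves implicit. No further comment is needed.
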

\begin{corollary}\label{crl2}
    If $f\in \mathcal{C}(\alpha)$, then
      $$  \vert T_{2,3}(f)\vert \leq \frac{(1 - \alpha )^2 (3- 2 \alpha)^2}{9} +\frac{(1- \alpha)^2 (2- \alpha)^2 (3- 2 \alpha)^2}{36}, \quad \alpha \in [0,1] .$$
\end{corollary}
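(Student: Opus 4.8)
The plan is to specialize Theorem~\ref{thmC} to $\Psi(z)=\dfrac{1+(1-2\alpha)z}{1-z}$, which is precisely the generating function of the class $\mathcal{C}(\alpha)$. The whole corollary then reduces to three tasks: (i) reading off the Taylor data of this $\Psi$, (ii) checking that the two hypotheses of the theorem hold for every $\alpha\in[0,1]$, and (iii) substituting the computed values into the displayed bound and simplifying.

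First I would expand $\Psi$ as a geometric series to get $\Psi(z)=1+2(1-\alpha)\sum_{n\ge 1}z^{n}$, so that $\Psi'(0)=2(1-\alpha)$, $\Psi''(0)=4(1-\alpha)$ and $\Psi'''(0)=12(1-\alpha)$. A direct computation then produces the two aggregate quantities occurring in the bound,
$$2(\Psi'(0))^{2}+\Psi''(0)=4(1-\alpha)(3-2\alpha),\qquad (\Psi'(0))^{3}+\tfrac{3}{2}\Psi'(0)\Psi''(0)+\tfrac13\Psi'''(0)=4(1-\alpha)(3-2\alpha)(2-\alpha),$$
where the second factorization comes from $2u^{2}+3u+1=(2u+1)(u+1)$ with $u=1-\alpha$. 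Squaring these and dividing by $144$ and $576$ respectively reproduces exactly the two summands in the corollary, so step (iii) is purely mechanical once these factorizations are recorded.

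Before substituting, I must confirm the two hypotheses of Theorem~\ref{thmC}. The size condition $|\Psi''(0)+2(\Psi'(0))^{2}|\ge 2\Psi'(0)$ collapses, after cancelling the positive factor $4(1-\alpha)$, to $3-2\alpha\ge 1$, which holds on $[0,1]$. The membership condition is the delicate part: I would compute $r_1=5-3\alpha$ and $r_2=(3-2\alpha)(2-\alpha)$, both positive, and split on the size of $r_1$. For $\alpha\in[0,\tfrac13]$ one has $r_1\ge 4$, and $(r_1,r_2)\in\Theta_7$ becomes $6\alpha^{2}-15\alpha+10\ge 0$, which holds automatically since its discriminant $-15$ is negative. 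For $\alpha\in[\tfrac13,1]$ one has $2\le r_1\le 4$, and $(r_1,r_2)\in\Theta_6$ becomes $5\alpha^{2}-18\alpha+13=5(\alpha-1)(\alpha-\tfrac{13}{5})\ge 0$, which holds because both factors are non-positive on that interval.

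The hard part is exactly this membership verification, since it is the only place where the intricate region description of Lemma~\ref{lemma2} must be matched against the one-parameter curve $\alpha\mapsto(r_1,r_2)$; the sign analysis of the two quadratics is what certifies that the sharp coefficient estimates of the theorem remain valid across the full range $\alpha\in[0,1]$. Once this is in place, substituting the Taylor data into the bound of Theorem~\ref{thmC} gives the stated inequality at once.
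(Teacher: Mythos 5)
Your proposal is correct and follows exactly the route the paper intends: the paper gives no separate proof of this corollary, merely asserting that it is an immediate consequence of Theorem~\ref{thmC} with $\Psi(z)=(1+(1-2\alpha)z)/(1-z)$, and your computations of $\Psi'(0)=2(1-\alpha)$, $\Psi''(0)=4(1-\alpha)$, $\Psi'''(0)=12(1-\alpha)$, the factorizations $2(\Psi'(0))^2+\Psi''(0)=4(1-\alpha)(3-2\alpha)$ and $(\Psi'(0))^3+\tfrac32\Psi'(0)\Psi''(0)+\tfrac13\Psi'''(0)=4(1-\alpha)(3-2\alpha)(2-\alpha)$, and the resulting bound all check out. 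Your explicit verification that $(r_1,r_2)=(5-3\alpha,(3-2\alpha)(2-\alpha))$ lies in $\Theta_7$ for $\alpha\in[0,\tfrac13]$ and in $\Theta_6$ for $\alpha\in[\tfrac13,1]$ supplies a detail the paper leaves implicit, but it is the same argument, just carried out in full.
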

\begin{corollary}
    If $f\in \mathcal{CC}(\beta)$, then
      $$  \vert T_{2,3}(f)\vert \leq  \beta ^4+\frac{\beta ^2 (1+17 \beta ^2)^2 }{324},\quad \beta \in [2/3,1] .$$
\end{corollary}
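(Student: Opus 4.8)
The plan is to specialize the preceding theorem to the Ma--Minda generator $\Psi(z) = \left( \frac{1+z}{1-z} \right)^{\beta}$, whose associated class $\mathcal{C}(\Psi)$ is precisely $\mathcal{CC}(\beta)$. The whole argument reduces to three steps: computing the first three Taylor coefficients of $\Psi$ at the origin, checking that the two hypotheses of the theorem hold on $\beta \in [2/3, 1]$, and inserting the derivative values into the closed-form estimate.

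First I would expand $\Psi$ through the logarithm: since $\log \Psi(z) = \beta\left[ \log(1+z) - \log(1-z) \right] = 2\beta z + \frac{2\beta}{3} z^{3} + \cdots$, exponentiation gives $\Psi(z) = 1 + 2\beta z + 2\beta^{2} z^{2} + \frac{2\beta(1+2\beta^{2})}{3} z^{3} + \cdots$. Reading off the coefficients yields $\Psi'(0) = 2\beta$, $\Psi''(0) = 4\beta^{2}$ and $\Psi'''(0) = 4\beta + 8\beta^{3}$, which are the only quantities entering the final bound.

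Next I would confirm admissibility. The first hypothesis $\left| \Psi''(0) + 2(\Psi'(0))^{2} \right| \geq 2\Psi'(0)$ becomes $12\beta^{2} \geq 4\beta$, that is $\beta \geq 1/3$, and so holds on the whole range. For the second, the formulas give $r_1 = 5\beta$ and $r_2 = \frac{1 + 17\beta^{2}}{3}$, and I would locate $(r_1, r_2)$ in $\bigcup_{i=1}^{7} \Theta_i$ by a case split at $\beta = 4/5$: for $\beta \in [2/3, 4/5]$ we have $r_1 \in [2, 4]$ and the required inequality $r_2 \geq \frac{1}{12}(r_1^{2} + 8)$ reduces to $43\beta^{2} \geq 4$, so $(r_1, r_2) \in \Theta_6$; for $\beta \in [4/5, 1]$ we have $r_1 \geq 4$ and $r_2 \geq \frac{2}{3}(r_1 - 1)$ reduces to $17\beta^{2} - 10\beta + 3 \geq 0$, which is always true since its discriminant is negative, so $(r_1, r_2) \in \Theta_7$. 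This membership check is the one genuinely delicate point of the proof.

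Finally, substituting into the estimate, the first summand is $\frac{1}{144}\left( 2(\Psi'(0))^{2} + \Psi''(0) \right)^{2} = \frac{1}{144}(12\beta^{2})^{2} = \beta^{4}$, while the inner expression of the second summand simplifies to $\frac{4\beta(1+17\beta^{2})}{3}$, so that $\frac{1}{576}$ of its square equals $\frac{\beta^{2}(1+17\beta^{2})^{2}}{324}$. Adding the two contributions produces the asserted bound. I expect the coefficient computation and the final simplification to be mechanical; the main obstacle, such as it is, lies in the region-membership verification, where one must track the switch from $\Theta_6$ to $\Theta_7$ at $\beta = 4/5$ and confirm the two reduced inequalities.
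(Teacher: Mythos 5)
Your proposal is correct and matches the paper's (implicit) argument: the paper obtains this corollary by specializing Theorem~1 to $\Psi(z)=((1+z)/(1-z))^{\beta}$, and your computations of $\Psi'(0)=2\beta$, $\Psi''(0)=4\beta^{2}$, $\Psi'''(0)=4\beta+8\beta^{3}$, the admissibility checks (including the $\Theta_6$/$\Theta_7$ split at $\beta=4/5$, which the paper leaves unstated), and the final simplification to $\beta^{4}+\beta^{2}(1+17\beta^{2})^{2}/324$ are all accurate.
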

\section{In higher dimensions}\label{Sec3}
   This section establishes sharp estimates of $\vert T_{2,3}(f) \vert$ for a subclass of holomorphic mappings on the unit ball of a complex Banach space and the unit polydisc in $\mathbb{C}^n$.
\begin{theorem}\label{thmB}
Let $f\in \mathcal{H}(\mathbb{B},\mathbb{C})$ with $f(0)=1$, $f(z)\neq 0$, $z\in \mathbb{B}$ and suppose $F(z)= z f(z)$. If $(DF(z))^{-1}(D^2 F(z)(z^2)+ DF(z)(z))\in \mathcal{M}_\Psi$ such that
    $$ \lvert \Psi''(0) + 2 (\Psi'(0))^2 \rvert \geq 2  \Psi'(0) \;\; \quad \text{and} \;\; ( r_1, r_2) \in \bigcup_{i=1}^7\Theta_i,$$
    then
\begin{equation*}
\begin{aligned}
   \bigg\vert \bigg(\frac{ l_z (D^3 F(0)(z^3))}{3! \|z\|^3} &\bigg)^2  - \bigg(\frac{ l_z (D^4 F(0)(z^4))}{4! \|z\|^4} \bigg)^2\bigg\vert \leq  \frac{1}{144}\left( 2 (\Psi'(0))^2 + \Psi''(0) \right)^2 \\
   &\;\;\;\;+ \frac{1}{576} \bigg((\Psi'(0))^3+ \frac{3 \Psi'(0) \Psi''(0)}{2}+\frac{\Psi'''(0)}{3}  \bigg)^2 , \;\; z\in \mathbb{B}\setminus\{0\}  .
\end{aligned}
\end{equation*}
   The bound is sharp.
\end{theorem}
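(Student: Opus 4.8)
The plan is to collapse the several‑variables problem onto the one‑variable result of Theorem~\ref{thmC} by restricting $F$ to complex lines through the origin and exploiting the rigid factorization $F(z)=zf(z)$ together with the fact that $f$ is \emph{scalar}-valued. The decisive observation is that this structure forces the auxiliary mapping $p(z)=(DF(z))^{-1}(D^2F(z)(z^2)+DF(z)(z))$ to be a scalar multiple of $z$; this turns the operator inverse $(DF(z))^{-1}$ into ordinary division by a scalar and lets the membership $p\in\mathcal{M}_\Psi$ be read off as a one‑variable subordination.

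First I would compute $DF$ and $D^2F$ from the product rule for Fréchet derivatives. Writing $F(w)=f(w)\,w$ one gets $DF(z)(z)=(Df(z)(z)+f(z))\,z$ and $D^2F(z)(z^2)=(D^2f(z)(z^2)+2Df(z)(z))\,z$, so the numerator $D^2F(z)(z^2)+DF(z)(z)=(D^2f(z)(z^2)+3Df(z)(z)+f(z))\,z$ is itself proportional to $z$. Since $p$ is well defined the operator $DF(z)$ is invertible, and the equation $DF(z)(w)=c\,z$ (with scalar $c$) has the unique solution $w=\lambda(z)\,z$; hence
\[ p(z)=\lambda(z)\,z,\qquad \lambda(z)=\frac{D^2f(z)(z^2)+3Df(z)(z)+f(z)}{Df(z)(z)+f(z)}. \]
Applying any $l_z\in T_z$ gives $l_z(p(z))/\|z\|=\lambda(z)$, so the hypothesis $p\in\mathcal{M}_\Psi$ says exactly that $\lambda(z)\in\Psi(\mathbb{U})$ for every $z\in\mathbb{B}\setminus\{0\}$.

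Next I would pass to a slice. Fix $z\in\mathbb{B}\setminus\{0\}$, set $x_0=z/\|z\|$, and define the one‑variable function $\mathcal{F}(\zeta)=\zeta f(\zeta x_0)$ on $\mathbb{U}$. With $\phi(\zeta)=f(\zeta x_0)$ one has $\zeta\phi'(\zeta)=Df(\zeta x_0)(\zeta x_0)$ and $\zeta^2\phi''(\zeta)=D^2f(\zeta x_0)((\zeta x_0)^2)$, and a short computation gives $1+\zeta\mathcal{F}''(\zeta)/\mathcal{F}'(\zeta)=\lambda(\zeta x_0)$. Because $\zeta x_0\in\mathbb{B}\setminus\{0\}$ for $\zeta\neq0$ and $\lambda(0)=\Psi(0)=1$, the previous step yields $1+\zeta\mathcal{F}''/\mathcal{F}'\in\Psi(\mathbb{U})$, and since $\Psi$ is univalent this is the subordination defining $\mathcal{C}(\Psi)$; thus $\mathcal{F}\in\mathcal{C}(\Psi)$. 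Finally, comparing homogeneous expansions of $F(w)=f(w)w$ gives $\tfrac{1}{m!}D^mF(0)(w^m)=\tfrac{1}{(m-1)!}D^{m-1}f(0)(w^{m-1})\,w$, whence $l_z(D^mF(0)(z^m))/(m!\|z\|^m)=a_m(\mathcal{F})$, where $a_m(\mathcal{F})$ is the $m$th Taylor coefficient of $\mathcal{F}$. In particular the left‑hand side of the theorem equals $|a_3(\mathcal{F})^2-a_4(\mathcal{F})^2|=|T_{2,3}(\mathcal{F})|$. As the hypotheses on $\Psi$ and on $(r_1,r_2)$ coincide with those of Theorem~\ref{thmC}, applying that theorem to $\mathcal{F}$ delivers the asserted bound.

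The main obstacle is precisely the reduction in the second step: recognizing and verifying that the factor $F=zf$ with scalar $f$ makes $p$ radial, so that $(DF(z))^{-1}$ acts as plain scalar division and $\lambda(z)$ is a genuine complex number; once this is secured, the remainder is bookkeeping between Fréchet derivatives and one‑variable Taylor coefficients. For sharpness I would fix $l\in L(X,\mathbb{C})$ with $\|l\|=1$ and set $f(z)=f_\Psi(l(z))/l(z)$, where $f_\Psi$ is the extremal function from the proof of Theorem~\ref{thmC} (the removable singularity at $l(z)=0$ gives $f(0)=1$ and $f\neq0$). Taking $z=x_0$ with $l(x_0)=1$ makes $\mathcal{F}=f_\Psi$, so equality propagates; the point needing care here is checking that this $f$ yields $F=zf$ in the required class, i.e.\ that $1+\zeta\mathcal{F}''/\mathcal{F}'$ reproduces the subordination satisfied by $f_\Psi$ and hence lies in $\Psi(\mathbb{U})$.
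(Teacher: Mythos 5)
Your argument is correct, and it reaches the bound by a genuinely different route than the paper. The paper never forms a one-variable function in the class $\mathcal{C}(\Psi)$: it works with the subordinate function $g(\zeta)=\lambda(\zeta z_0)\prec\Psi$, extracts the coefficient identities for $Df(0)(z_0)$, $D^2f(0)(z_0^2)$, $D^3f(0)(z_0^3)$ by hand, bounds the fourth-order functional via Lemma~\ref{lm3} and the third-order one by quoting the higher-dimensional Fekete--Szeg\"o theorem of Xu et al., and then adds the two squares. You instead observe that the radial structure $p(z)=\lambda(z)z$ (which the paper also derives, as its equation for $(DF(z))^{-1}(D^2F(z)(z^2)+DF(z)(z))$) lets you package everything into the slice function $\mathcal{F}(\zeta)=\zeta f(\zeta z_0)$, verify $1+\zeta\mathcal{F}''/\mathcal{F}'=\lambda(\zeta z_0)\in\Psi(\mathbb{U})$ so that $\mathcal{F}\in\mathcal{C}(\Psi)$, and identify the left-hand side with $\vert T_{2,3}(\mathcal{F})\vert$ via $l_z(D^mF(0)(z^m))/(m!\|z\|^m)=a_m(\mathcal{F})$; Theorem~\ref{thmC} then applies verbatim. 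This makes the higher-dimensional statement a literal corollary of the one-variable one, avoids the external citation and the re-derivation of coefficient formulas, and makes transparent why the sharp constant is dimension-free; the paper's componentwise treatment has the advantage that its intermediate estimates are reused in the polydisk version (Theorem~\ref{ThmUn1}), where no single scalar slice captures the norm of the vector-valued functional. Two small points to tighten: in the sharpness construction you should take $l=l_u\in T_u$ for a fixed unit vector $u$ (Hahn--Banach guarantees a norm-attaining functional; an arbitrary $l$ with $\|l\|=1$ need not attain its norm), which is exactly the paper's choice and yields $F_\Psi(z)=f_\Psi(l_u(z))z/l_u(z)$ with $\lambda(z)=\Psi(i\,l_u(z))\in\Psi(\mathbb{U})$; and you should remark that $\mathcal{F}'(\zeta)=f(\zeta z_0)+Df(\zeta z_0)(\zeta z_0)\neq 0$ (it cannot vanish, since otherwise $DF(\zeta z_0)(\zeta z_0)=0$ would contradict the invertibility of $DF$ implicit in the hypothesis), so the subordination quotient is holomorphic on all of $\mathbb{U}$.
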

\begin{proof}
   Let $g : \mathbb{U} \rightarrow \mathbb{C}$ be defined by
\begin{align*}
    g(\zeta)=
\left\{
\begin{array}{ll}
     \dfrac{l_z ((DF(\zeta z_0))^{-1}(D^2 F(\zeta z_0)(\zeta z_0^2)+ DF(\zeta z_0)(\zeta z_0)) )}{\zeta}, & \zeta \neq 0 \\ \ \\
     1, & \zeta =0,
\end{array}
\right.
\end{align*}
  where $z_0= \frac{z}{\|z\|}$ for fix $z\in X\setminus \{0\}$. Then $g\in \mathcal{H}(\mathbb{U})$ and $g(0)=\Psi(0)=1$. Since $(DF(z))^{-1}(D^2 F(z)(z^2)+ DF(z)(z))\in \mathcal{M}_\Psi$, it follows that
\begin{align*}
    g(\zeta) &= \frac{l_z((DF(\zeta z_0))^{-1}(D^2 F(\zeta z_0)(\zeta z_0^2)+ DF(\zeta z_0)(\zeta z_0)))}{\zeta} \\
            &=  \frac{l_{z_0} ((DF(\zeta z_0))^{-1}(D^2 F(\zeta z_0)(\zeta z_0^2)+ DF(\zeta z_0)(\zeta z_0)))}{\zeta} \\
            &= \frac{l_{\zeta z_0} ((DF(\zeta z_0))^{-1}(D^2 F(\zeta z_0)(\zeta z_0^2)+ DF(\zeta z_0)(\zeta z_0)))}{\| \zeta z_0\|} \in \Psi(\mathbb{U}).
\end{align*}
   From $F(z)= z f(z)$, we deduce that
\begin{equation}\label{F=zf}
    D^2 F(z)(z^2)+D F(z)(z)= ( D^2 f(z)(z^2)+  3 D f(z) (z) + f(z))z.
\end{equation}
   Following the same procedure as in \cite[Theorem 7.1.14]{GraKoh}, we obtain
\begin{equation}\label{FFinverse}
    (D F(z))^{-1} = \frac{1}{f(z)} \bigg( I - \frac{\frac{z Df(z)}{f(z)}}{1+ \frac{D f(z)z}{f(z)}} \bigg).
\end{equation}
   Equations (\ref{F=zf}) and (\ref{FFinverse}) together yield
\begin{equation}\label{3.3}
    (DF(z))^{-1}(D^2 F(z)(z^2)+ DF(z)(z)) = \frac{D^2 f(z)(z^2)+  3 D f(z) (z) + f(z)}{f(z)+ D (z)(z)} z.
\end{equation}
     Consequently, we get
\begin{equation*}\label{Keep}
   \frac{l_z( (DF(z))^{-1}(D^2 F(z)(z^2)+ DF(z)(z)))}{\| z\|} = \frac{D^2 f(z)(z^2)+  3 D f(z) (z) + f(z)}{f(z)+ D (z)(z)}.
\end{equation*}
    Therefore, we obtain
\begin{align*}
    g(\zeta) &= \frac{l_{\zeta z_0} ((DF(\zeta z_0))^{-1}(D^2 F(\zeta z_0)(\zeta z_0^2)+ DF(\zeta z_0)(\zeta z_0)))}{\| \zeta z_0\|} \\
               &=  \frac{D^2 f(\zeta z_0)((\zeta z_0)^2)+  3 D f(\zeta z_0) (\zeta z_0) + f(\zeta z_0)}{f(\zeta z_0)+ D (\zeta z_0)(\zeta z_0)}.
\end{align*}
     Expanding $f$ and $g$ in Taylor series leads to
\begin{align*}
     &\bigg(1+ g'(0) \zeta + \frac{g''(0)}{2} \zeta^2 + \frac{g'''(0)}{6} \zeta^3 +\cdots\bigg) \bigg(1+ 2 D f(0)(z_0) \zeta + \frac{3}{2} D^2 f(0)(z_0^2) \zeta^2 \\
     &+ \frac{2}{3}D^3 f(0)(z_0^3) \zeta^3+\cdots \bigg)= \bigg( 1+ 4 D f(0)(z_0)\zeta+\frac{9}{2}D^2 f(0)(z_0^2)\zeta^2 + \frac{8}{3}D^3 f(0)(z_0^3)\zeta^3+ \cdots\bigg).
\end{align*}
    Equating the homogeneous terms in the series expansions of both sides yields
\begin{align*}
    D f(0)(z_0)&=  \frac{g'(0)}{2}, \quad \frac{D^2 f(0)(z_0^2)}{2} =\frac{1}{6}\bigg(\frac{g''(0)}{2}+ (g'(0))^2\bigg)
\end{align*}
    \text{and}
\begin{align*}
    \frac{D^3 f(0)(z_0^3)}{6} &=\frac{1}{72} \left(3 (g'(0))^3+\frac{9 g'(0) g''(0)}{2}+g'''(0)\right).
\end{align*}
   That is
\begin{align*}
    D f(0)(z)&=  \frac{g'(0)}{2}\|z\|, \quad \frac{D^2 f(0)(z^2)}{2} =\frac{1}{6}\bigg(\frac{g''(0)}{2}+ (g'(0))^2\bigg)\|z\|^2
\end{align*}
and
\begin{equation}\label{thiseq}
    \frac{D^3 f(0)(z^3)}{6} =\frac{1}{72} \left(3 (g'(0))^3+\frac{9 g'(0) g''(0)}{2}+g'''(0)\right)\|z\|^3.
\end{equation}
   Again, by the fact $F(z)=z f(z)$, we have
   $$ \frac{D^4 F(0)(z^4)}{4!} = \frac{D^3 f(0)(z^3)}{3!}z,$$
   which immediately gives
   $$ \frac{l_z(D^4 F(0)(z^4))}{4!} = \frac{D^3 f(0)(z^3)}{3!}\|z\|. $$
  Using~(\ref{thiseq}) in the above identity, we get
         $$ \frac{l_z(D^4 F(0)(z^4))}{4!} =\frac{1}{144} \left(6 (g'(0))^3+9 g'(0) g''(0)+2 g'''(0)\right)\|z\|^4.$$
  As $g\prec \Psi$,  Lemma~\ref{lm3} implies
\begin{equation}\label{a4BB}
      \bigg \vert \frac{l_z(D^4 F(0)(z^4))}{4! \|z\|^4} \bigg\vert \leq \frac{1}{24} \left\vert (\Psi'(0))^3+\frac{3 \Psi'(0) \Psi''(0)}{2}+\frac{ \Psi'''(0)}{3}\right\vert.
\end{equation}
  Furthermore, Xu et al. \cite[Theorem 3.1]{XuC} established the result for $\lambda \in \mathbb{C}$ that
\begin{equation*}\label{FSBC}
\begin{aligned}
   \bigg\vert  \frac{ l_z (D^3 F(0) (z^3))}{3! \vert\vert z \vert\vert^3} &-  \lambda \bigg(\frac{ l_z (D^2 F(0) (z^2))}{2! \vert\vert z \vert\vert^2} \bigg)^2 \bigg\vert \\
   & \leq \frac{\vert \Psi'(0) \vert}{6} \max \left\{ 1,  \left\lvert \frac{1}{2} \frac{\Psi''(0)}{\Psi'(0)} + \bigg(1 - \dfrac{3}{2} \lambda \bigg)  \Psi'(0) \right\rvert \right\} , \;\;  z \in \mathbb{B}\setminus \{0 \}.
\end{aligned}
\end{equation*}
    Given that $ \lvert \Psi''(0) + 2 (\Psi'(0))^2 \rvert \geq 2  \Psi'(0)$, it follows from the above inequality that
\begin{equation}\label{a3BC}
     \bigg\vert  \dfrac{ l_z (D^3 F(0) (z^3))}{3! \vert\vert z \vert\vert^3} \bigg\vert \leq \dfrac{ \Psi'(0)}{6}   \left( \dfrac{1}{2} \dfrac{\Psi''(0)}{\Psi'(0)} +  \Psi'(0) \right).
\end{equation}
    Incorporating the bounds from (\ref{a4BB}) and (\ref{a3BC}) into the inequality
\begin{align*}
     \bigg\vert \bigg( \frac{ l_z (D^3 F(0) (z^3))}{3! \vert\vert z \vert\vert^3} \bigg)^2 &-    \bigg(   \frac{ l_z ( D^4 F(0) (z^4))  }{4! \| z\|^4}  \bigg)^2 \bigg\vert \leq  \bigg\vert  \frac{ l_z (D^3 F(0) (z^3))}{3! \vert\vert z \vert\vert^3} \bigg\vert^2  +   \bigg\vert \frac{ l_z ( D^4 F(0) (z^4)) }{4! \| z\|^4} \bigg\vert^2
\end{align*}
   leads to the desired result.

   Sharpness of the obtained bound can be verified by the mapping
   $$ D F_\Psi(z) = I \exp \int_{0}^{l_u(z)} \bigg(\frac{\Psi(i t)-1}{t}\bigg)dt, \quad z\in \mathbb{B}, \quad \|u\|=1.$$
   It follows that $(DF_\Psi(z))^{-1}(D^2 F_\Psi(z)(z^2)+ DF_\Psi(z)(z))\in \mathcal{M}_\Psi$, and a straightforward computation gives
   $$ \frac{D^3 F(0)(z^3)}{3!} = -\frac{1}{6}\bigg( (\Psi'(0))^2 + \frac{\Psi''(0)}{2} \bigg) (l_u(z))^2 z$$
   and
   $$ \frac{D^4 F(0)(z^4)}{4!} =- \frac{i}{24}\bigg( (\Psi'(0))^3 + \frac{3 \Psi'(0)\Psi''(0)}{2}+ \frac{\Psi'''(0)}{3} \bigg) (l_u(z))^3 z,$$
   which immediately provides
   $$ \frac{l_z (D^3 F(0)(z^3))}{3!} = -\frac{1}{6}\bigg( (\Psi'(0))^2 + \frac{\Psi''(0)}{2} \bigg) (l_u(z))^2 \|z\|$$
   and
   $$ \frac{l_z(D^4 F(0)(z^4))}{4!} =- \frac{i}{24}\bigg( (\Psi'(0))^3 + \frac{3 \Psi'(0)\Psi''(0)}{2}+ \frac{\Psi'''(0)}{3} \bigg) (l_u(z))^3 \|z\|.$$
   On setting $z=r u$ $(0< r< 1)$, we obtain
   $$ \frac{l_z (D^3 F(0)(z^3))}{3! \|z\|^3} = -\frac{1}{6}\bigg( (\Psi'(0))^2 + \frac{\Psi''(0)}{2} \bigg)$$
   and
   $$ \frac{l_z(D^4 F(0)(z^4))}{4! \|z\|^4} =- \frac{i}{24}\bigg( (\Psi'(0))^3 + \frac{3 \Psi'(0)\Psi''(0)}{2}+ \frac{\Psi'''(0)}{3} \bigg) .$$
  Thus for the mapping $F_\Psi$, we have
\begin{align*}
  \bigg\vert\frac{l_z (D^3 F(0)(z^3))}{3! \|z\|^3} -\frac{l_z(D^4 F(0)(z^4))}{4! \|z\|^4}\bigg\vert &= \frac{1}{144}\left( 2 (\Psi'(0))^2 + \Psi''(0) \right)^2 \\
  &+ \frac{1}{576} \bigg((\Psi'(0))^3+ \frac{3 \Psi'(0) \Psi''(0)}{2}+\frac{\Psi'''(0)}{3}  \bigg)^2,
\end{align*}
  which completes the proof.
\end{proof}

\begin{theorem}\label{ThmUn1}
   Let $f\in \mathcal{H}(\mathbb{U}^n,\mathbb{C})$ with $f(0)=1$, $f(z)\neq 0$, $z\in \mathbb{U}^n$ and suppose $F(z)= z f(z)$. If $(DF(z))^{-1}(D^2 F(z)(z^2)+ DF(z)(z))\in \mathcal{M}_\Psi$ such that
    $$  \lvert \Psi''(0) + 2 (\Psi'(0))^2 \rvert \geq 2 \Psi'(0) \;\;\text{and}\;\; (r_1,r_2)\in \bigcup_{i=1}^7 \Theta_i ,$$
   then
\begin{equation}\label{mnresult2}
\begin{aligned}
\left.
\begin{array}{ll}
     &  \bigg\|  \dfrac{1}{4!}D^4 F(0) \bigg(z^3,  \dfrac{D^4 F(0)(z^4)}{4!} \bigg)  - \dfrac{1}{3 !} D^3 F(0) \bigg( z^2, \dfrac{D^3 F(0)(z^3)}{3!} \bigg)  \bigg\|  \\
   &\quad\quad\quad\quad\quad\quad\quad\quad\quad\quad\quad\leq  \dfrac{\|z\|^7}{576} \bigg\vert(\Psi'(0))^3 + \dfrac{3 \Psi'(0)\Psi''(0)}{2}+ \dfrac{\Psi'''(0)}{3} \bigg\vert^2   \\
 & \quad\quad\quad\quad\quad\quad\quad\quad\quad\quad\quad+   \dfrac{ (\Psi'(0))^2 \|z\|^5}{36}   \left( \dfrac{1}{2} \dfrac{\Psi''(0)}{\Psi'(0)} +   \Psi'(0) \right)^2, \; z \in \mathbb{U}^n  .
\end{array}
\right\}
\end{aligned}
\end{equation}
   The estimate is sharp.
\end{theorem}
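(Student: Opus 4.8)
The plan is to reduce the vector-valued quantity on the left-hand side to the scalar homogeneous expansion coefficients of $f$, and then invoke the one-variable bounds already obtained along the way in Theorem~\ref{thmB}. Writing $f(z)=1+\sum_{m\geq 1}P_m(z)$ with $P_m(z)=\frac{D^m f(0)(z^m)}{m!}$ the degree-$m$ homogeneous part, the relation $F(z)=zf(z)$ gives componentwise $F_j(z)=z_j f(z)$, so the degree-$(m+1)$ homogeneous part of $F_j$ is $z_j P_m(z)$; equivalently $\frac{D^{m+1}F(0)(z^{m+1})}{(m+1)!}=P_m(z)\,z$. In particular $\frac{D^4 F(0)(z^4)}{4!}=P_3(z)\,z$ and $\frac{D^3 F(0)(z^3)}{3!}=P_2(z)\,z$.

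First I would compute the two mixed multilinear terms explicitly. Using the polarization identity $L(z^{d-1},w)=\frac1d DQ(z)(w)$ for a symmetric $d$-linear form $L$ with diagonal $Q(z)=L(z^d)$, together with Euler's relation $DP_m(z)(z)=m\,P_m(z)$, the $j$-th component of $\frac{1}{4!}D^4F(0)\big(z^3,w\big)$ with $w=\frac{D^4F(0)(z^4)}{4!}=P_3 z$ evaluates to $\frac14\big(w_jP_3+z_j DP_3(z)(w)\big)=\frac14\big(z_jP_3^2+3z_jP_3^2\big)=z_jP_3(z)^2$, and similarly the $j$-th component of $\frac{1}{3!}D^3F(0)\big(z^2,v\big)$ with $v=P_2 z$ equals $z_jP_2(z)^2$. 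Hence the entire vector on the left collapses to $z\big(P_3(z)^2-P_2(z)^2\big)$, and since $\|\cdot\|$ is the maximum-norm on the polydisk, its norm is exactly $\|z\|\,|P_3(z)^2-P_2(z)^2|\le \|z\|\big(|P_3(z)|^2+|P_2(z)|^2\big)$.

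Next I would bound $|P_2(z)|$ and $|P_3(z)|$. Since $\mathbb{U}^n$ is the unit ball of the complex Banach space $(\mathbb{C}^n,\|\cdot\|_\infty)$, the estimates~(\ref{a4BB}) and~(\ref{a3BC}) from the proof of Theorem~\ref{thmB} apply directly under the same hypotheses on $\Psi$ and $(r_1,r_2)$. Using $l_z\big(\frac{D^4F(0)(z^4)}{4!}\big)=P_3(z)\,l_z(z)=P_3(z)\|z\|$ and the analogous identity for the third derivative, these read $|P_3(z)|\le\frac{\|z\|^3}{24}\big|(\Psi'(0))^3+\frac{3\Psi'(0)\Psi''(0)}{2}+\frac{\Psi'''(0)}{3}\big|$ and $|P_2(z)|\le\frac{\Psi'(0)\|z\|^2}{6}\big(\frac12\frac{\Psi''(0)}{\Psi'(0)}+\Psi'(0)\big)$. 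Substituting these into $\|z\|\big(|P_3|^2+|P_2|^2\big)$ produces precisely the right-hand side of~(\ref{mnresult2}).

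Finally, for sharpness I would take the extremal mapping of the same form as in Theorem~\ref{thmB}, namely $DF_\Psi(z)=I\exp\int_0^{l_u(z)}\frac{\Psi(it)-1}{t}\,dt$ with $\|u\|=1$, so that $l_u(z)$ reduces to a coordinate functional on $\mathbb{U}^n$. For this mapping $P_2(z)=-\frac{\|z\|^2}{6}\big((\Psi'(0))^2+\frac{\Psi''(0)}{2}\big)$ is real and $P_3(z)=-\frac{i\|z\|^3}{24}\big((\Psi'(0))^3+\frac{3\Psi'(0)\Psi''(0)}{2}+\frac{\Psi'''(0)}{3}\big)$ is purely imaginary, whence $P_3^2$ and $-P_2^2$ are both nonpositive reals and $|P_3^2-P_2^2|=|P_3|^2+|P_2|^2$; evaluating at $z=ru$ then forces equality in~(\ref{mnresult2}). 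The main obstacle is the algebraic reduction in the second paragraph: correctly expanding the mixed Fréchet derivatives $D^4F(0)(z^3,\cdot)$ and $D^3F(0)(z^2,\cdot)$ via polarization and Euler's identity so that the vector collapses to $z\big(P_3(z)^2-P_2(z)^2\big)$, after which the estimate is identical to the scalar bound of Theorem~\ref{thmB} up to the extra factor $\|z\|$.
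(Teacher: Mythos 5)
Your proposal is correct and reaches the paper's bound by the same overall strategy (reduce the two mixed Fr\'echet-derivative terms to squares of the scalar homogeneous coefficients of $f$, bound each by the estimates underlying Theorem~\ref{thmB}, apply the triangle inequality, and use the same extremal mapping), but the technical execution differs in two respects. First, where the paper establishes the identity (\ref{Second2}) by exploiting that the second slot of $D^4F_k(0)(z_0^3,\cdot)$ is fed a scalar multiple of $z_0$ itself (so the scalar factors out and the diagonal value $D^4F_k(0)(z_0^4)$ appears), you compute $\frac{1}{4!}D^4F(0)(z^3,w)$ for a general $w$ via polarization and Euler's identity and only then substitute $w=P_3(z)z$; both computations are valid and collapse the left-hand side to $z\,(P_3(z)^2-P_2(z)^2)$. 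Second, and more substantively, the paper works at $z_0=z/\|z\|$, obtains the coefficient bounds on the distinguished boundary $\partial_0\mathbb{U}^n$, and recovers the factors $\|z\|^7$ and $\|z\|^5$ via the maximum modulus principle applied to the holomorphic functions $k\mapsto \frac{1}{4!}D^4F_k(0)(z^3,\cdot)$; you bypass this step entirely by observing that $\mathbb{U}^n$ is the unit ball of $(\mathbb{C}^n,\|\cdot\|_\infty)$, so the Banach-space estimates (\ref{a4BB}) and (\ref{a3BC}) from Theorem~\ref{thmB} already give $\vert P_3(z)\vert\leq \frac{\|z\|^3}{24}\vert\cdots\vert$ and $\vert P_2(z)\vert\leq\frac{\Psi'(0)\|z\|^2}{6}(\cdots)$ at every $z$, with the correct powers of $\|z\|$ built in. This is a genuine streamlining (one should just note explicitly that the $l_z$-formulation of $\mathcal{M}_\Psi$ and the coordinate formulation on the polydisk coincide for the supporting functional $l_z(w)=\overline{z_k}w_k/\vert z_k\vert$, which the paper itself takes for granted). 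The sharpness argument, including the observation that $P_2^2\geq 0$ and $P_3^2\leq 0$ for the extremal mapping so that the triangle inequality is tight, matches the paper's.
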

\begin{proof}
  For $z\in \mathbb{U}^n \setminus \{0\}$, let $z_0 = \frac{z}{\| z\|}$. Define the function $g_k : \mathbb{U} \rightarrow \mathbb{C}$ such that
\begin{equation}\label{hkzeta}
   g_k (\zeta) =
\left\{
\begin{array}{ll}
     \dfrac{h_k (\zeta z_0) \| z_0 \|}{\zeta z_k}, & \zeta \neq 0,\\
     1 , & \zeta =0,
\end{array}
\right.
\end{equation}
    where $h(z) = (D F(z))^{-1} ( D^2 F(z) (z^2) + D F(z) (z))$ and $k$ satisfies $\vert z_k \vert = \| z \| = \max_{1\leq j \leq n} \{ z_j \}$. Since $(D F(z))^{-1} ( D^2 F(z) (z^2) + D F(z) (z)) \in \mathcal{M}_\Psi$, it follows that $g_k (\zeta) \in h (\mathbb{U})$.
     A direct consequence of  (\ref{3.3}) yields that
    $$ g_k(\zeta) =   \frac{ D^2 f(\zeta z_0) ((\zeta z_0)^2 ) + 3 D f(\zeta z_0)(\zeta z_0) + f(\zeta z_0) }{ f(\zeta z_0)  + D f(\zeta z_0) ( \zeta z_0)} .  $$
   Expanding of $f$ and $g_k$ in Taylor series about $\zeta$, we obtain
\begin{align*}
     &\bigg(1+ g'_k(0) \zeta + \frac{g''_k(0)}{2} \zeta^2 + \frac{g'''_k(0)}{6} \zeta^3 +\cdots\bigg) \bigg(1+ 2 D f(0)(z_0) \zeta + \frac{3}{2} D^2 f(0)(z_0^2) \zeta^2 \\
     &+ \frac{2}{3}D^3 f(0)(z_0^3) \zeta^3+\cdots \bigg)= \bigg( 1+ 4 D f(0)(z_0)\zeta+\frac{9}{2}D^2 f(0)(z_0^2)\zeta^2 + \frac{8}{3}D^3 f(0)(z_0^3)\zeta^3+ \cdots\bigg).
\end{align*}
    A comparison of the homogeneous expansions on both sides of the equality leads to
\begin{align*}
    D f(0)(z_0)&=  \frac{g'_k(0)}{2}, \quad \frac{D^2 f(0)(z_0^2)}{2} =\frac{1}{6}\bigg(\frac{g''_k(0)}{2}+ (g'_k(0))^2\bigg)
\end{align*}
    \text{and}
\begin{equation}\label{a3Un}
    \frac{D^3 f(0)(z_0^3)}{6} =\frac{6 (g'_k(0))^3+ 9 g'_k(0) g''_k(0)+ 2 g'''_k(0)}{144}.
\end{equation}
     In addition, from the relation $F(z_0) =  z_0 f(z_0)$, we have
\begin{equation}\label{use}
    \frac{D^3 F_k(0) (z_0^3)}{3!} = \frac{D^2 f(0) (z_0^2)}{2!} \frac{z_k}{ \| z\|}\;\;\; \text{and} \;\; \;  \frac{D^4 F_k(0) (z_0^4)}{4!} =\frac{ D^3 f(0) (z_0^3)}{3! }  \frac{z_k}{ \| z\|}.
\end{equation}
    Using (\ref{use}), we deduce that
\begin{equation}\label{Second2}
\begin{aligned}
\left.
\begin{array}{ll}
    \bigg\vert  \dfrac{1}{4 !} D^4 F_k(0) \bigg( z_0^3, \dfrac{D^4 F(0)(z_0^4)}{4!}& \bigg) \dfrac{\| z\| }{z_k} \bigg\vert
    \\&=\Big\vert  \dfrac{1}{4 !}D^4 F_k(0) \bigg( z_0^3, \dfrac{ D^3 f(0) (z_0^3)}{3! }   z_0 \bigg) \dfrac{\| z\| }{z_k} \bigg\vert \\
       &= \Big\vert \dfrac{ D^3 f(0) (z_0^3)}{3! } \dfrac{1}{4 !} D^4 F_k(0) ( z_0^3,  z_0 ) \dfrac{\| z\| }{z_k} \bigg\vert  \\
       &= \Big\vert  \dfrac{ D^3 f(0) (z_0^3)}{3! } \dfrac{1}{4 !} D^4 F_k(0) ( z_0^4 ) \dfrac{\| z\| }{z_k} \bigg\vert\\
       & = \Big\vert \bigg(  \dfrac{ D^3 f(0) (z_0^3)}{3! }  \bigg)^2 \bigg\vert.
\end{array}
\right\}
\end{aligned}
\end{equation}
   By applying (\ref{a3Un}) to (\ref{Second2}), we get
\begin{equation*}
    \bigg\vert  \frac{1}{4 !} D^4 F_k(0) \bigg( z_0^3, \frac{D^4 F(0)(z_0^4)}{4!} \bigg) \frac{\| z\| }{z_k} \bigg\vert  = \bigg\vert\frac{6 (g'_k(0))^3+9 g'_k(0) g''_k(0)+2 g'''_k(0)}{144}\bigg\vert^2.
\end{equation*}
   In view of the hypothesis $ (r_1,r_2)\in \bigcup_{i=1}^7 \Theta_i ,$ Lemma~\ref{lm3} gives
\begin{equation*}
    \bigg\vert  \frac{1}{4 !} D^4 F_k(0) \bigg( z_0^3, \frac{D^4 F(0)(z_0^4)}{4!} \bigg) \frac{\| z\| }{z_k} \bigg\vert  \leq \frac{1}{576} \bigg\vert(\Psi'(0))^3 + \frac{3 \Psi'(0)\Psi''(0)}{2}+ \frac{\Psi'''(0)}{3} \bigg\vert^2.
\end{equation*}
   If $z_0 \in \partial_0 \mathbb{U}^n$, then it follows that
\begin{equation*}
    \bigg\vert  \frac{1}{4 !} D^4 F_k(0) \bigg( z_0^3, \frac{D^4 F(0)(z_0^4)}{4!} \bigg)  \bigg\vert  \leq \frac{1}{576} \bigg\vert(\Psi'(0))^3 + \frac{3 \Psi'(0)\Psi''(0)}{2}+ \frac{\Psi'''(0)}{3} \bigg\vert^2.
\end{equation*}
 Since
   $$  \frac{1}{4 !} D^4 F_k(0) \bigg( z_0^3, \frac{D^4 F(0)(z_0^4)}{4!} \bigg) , \quad k =1,2,\cdots , n,$$
   are holomorphic on $\overline{\mathbb{U}}^n$, the maximum modulus principle yields
\begin{equation}\label{a4Un}
    \bigg\vert  \frac{1}{4 !} D^4 F_k(0) \bigg( z^3, \frac{D^4 F(0)(z^4)}{4!} \bigg)  \bigg\vert  \leq \frac{\|z\|^7}{576} \bigg\vert(\Psi'(0))^3 + \frac{3 \Psi'(0)\Psi''(0)}{2}+ \frac{\Psi'''(0)}{3} \bigg\vert^2.
\end{equation}
  For $\lambda \in \mathbb{C}$, Xu et al. \cite[Theorem 3.2]{XuC} established that
\begin{equation}\label{FSUnC}
\begin{aligned}
\left.
\begin{array}{ll}
    \bigg\vert   \dfrac{ D^3 F_k(0) (z_0^3)}{3! } \dfrac{\| z\| }{z_k} -  &\lambda \dfrac{1}{2} D^2  F_k (0)  \bigg( z_0, \dfrac{D^2 F(0) (z_0^2)}{2!}\dfrac{\| z\| }{z_k} \bigg) \bigg\vert \\
   &\;\;\;\; \leq \dfrac{\vert \Psi'(0) \vert }{6} \max \bigg\{ 1  ,\left\lvert \dfrac{1}{2} \dfrac{\Psi''(0)}{\Psi'(0)} + \bigg(1 - \dfrac{3}{2} \lambda \bigg)  \Psi'(0)  \right\rvert \bigg\} .
\end{array}
\right\}
\end{aligned}
\end{equation}
    Since  $\lvert \Psi''(0) + 2 (\Psi'(0))^2 \rvert \geq 2 \Psi'(0)$, from (\ref{use}) and (\ref{FSUnC}), we get
\begin{equation}\label{a3UnC}
    \bigg\vert  \frac{ D^3 F_k(0) (z_0^3)}{3! } \dfrac{\| z\| }{z_k}   \bigg\vert =  \bigg\vert \frac{D^2 f(0) (z_0^2)}{2!} \bigg\vert   \leq \frac{ \Psi'(0)}{6}   \left( \frac{1}{2} \frac{\Psi''(0)}{\Psi'(0)} +   \Psi'(0) \right).
\end{equation}
   Following the same technique as in (\ref{Second2}), we derive
\begin{equation}\label{spl4}
     \bigg\vert  \frac{1}{3!}D^3 F_k(0) \bigg(z_0^2,  \frac{D^3 F(0)(z_0^3)}{3!} \bigg) \frac{\| z\| }{z_k} \bigg\vert =\bigg\vert   \bigg( \frac{D^2 f(0)(z_0^2)}{2!} \bigg) \bigg\vert^2.
\end{equation}
   Consequently. equations~(\ref{a3UnC}) and (\ref{spl4}) yields
\begin{equation*}
     \bigg\vert  \frac{1}{3!}D^3 F_k(0) \bigg(z_0^2,  \frac{D^3 F(0)(z_0^3)}{3!} \bigg) \frac{\| z\| }{z_k} \bigg\vert \leq  \frac{ (\Psi'(0))^2}{36}   \left( \frac{1}{2} \frac{\Psi''(0)}{\Psi'(0)} +   \Psi'(0) \right)^2.
\end{equation*}
      For $z_0 \in \partial_0 \mathbb{U}^n$, we get
\begin{equation*}
     \bigg\vert  \frac{1}{3!}D^3 F_k(0) \bigg(z_0^2,  \frac{D^3 F(0)(z_0^3)}{3!} \bigg)  \bigg\vert \leq  \frac{ (\Psi'(0))^2}{36}   \left( \frac{1}{2} \frac{\Psi''(0)}{\Psi'(0)} +   \Psi'(0) \right)^2.
\end{equation*}
  Due to the holomorphicity of the functions
   $$  \frac{1}{3!}D^3 F_k(0) \bigg(z_0^2,  \frac{D^3 F(0)(z_0^3)}{3!} \bigg)  , \quad k =1,2,\cdots , n,$$
     on the closed  unit polydisc  $\overline{\mathbb{U}}^n$,  the maximum modulus principle for holomorphic
functions implies that
\begin{equation}\label{A33Un}
     \bigg\vert  \frac{1}{3!}D^3 F_k(0) \bigg(z^2,  \frac{D^3 F(0)(z^3)}{3!} \bigg)  \bigg\vert \leq  \frac{ (\Psi'(0))^2 \|z\|^5}{36}   \left( \frac{1}{2} \frac{\Psi''(0)}{\Psi'(0)} +   \Psi'(0) \right)^2.
\end{equation}
   Using the bounds from (\ref{a4Un}) and (\ref{A33Un}) together with triangular inequality, we derive
\begin{align*}
    \bigg\vert &\frac{1}{4!}D^4 F_k(0) \bigg(z^3,  \frac{D^4 F(0)(z^4)}{4!} \bigg) - \frac{1}{3!} D^3 F_k(0) \bigg( z^2, \frac{D^3 F(0)(z^3)}{3!} \bigg) \bigg\vert \\
      &\leq \frac{\|z\|^7}{(144)^2} \bigg\vert \left(6 (\Psi'_k(0))^3+9 \Psi'_k(0) \Psi''_k(0)+2 \Psi'''_k(0)\right)\bigg\vert^2  +   \frac{ (\Psi'(0))^2 \|z\|^5}{36}   \left( \frac{1}{2} \frac{\Psi''(0)}{\Psi'(0)} +   \Psi'(0) \right)^2
\end{align*}
   for $ k=1,2, \cdots n.$ Therefore,
\begin{align*}
    \bigg\| &\frac{1}{4!}D^4 F_k(0) \bigg(z^3,  \frac{D^4 F(0)(z^4)}{4!} \bigg) - \frac{1}{3!} D^3 F_k(0) \bigg( z^2, \frac{D^3 F(0)(z^3)}{3!} \bigg) \bigg\|  \\
      &\leq \frac{\|z\|^7}{576} \bigg\vert(\Psi'(0))^3 + \frac{3 \Psi'(0)\Psi''(0)}{2}+ \frac{\Psi'''(0)}{3} \bigg\vert^2+   \frac{ (\Psi'(0))^2 \|z\|^5}{36}   \left( \frac{1}{2} \frac{\Psi''(0)}{\Psi'(0)} +   \Psi'(0) \right)^2, \;z \in \mathbb{U}^n.
\end{align*}

    To show that the estimate is sharp, consider the mapping $F_\Psi$ given by
\begin{equation}\label{extUnC}
     D F_\Psi(z) = I \exp \int_0^{z_1} \frac{\Psi(i t)-1}{t} dt.
\end{equation}
    It can be verified that $(D F_\Psi(z))^{-1} ( D^2 F_\Psi(z) (z^2) + D F_\Psi(z) (z) ) \in \mathcal{M}_\Psi$ and for $z =(r,0,\cdots,0)'$ equality is attained in (\ref{mnresult2}), thereby confirming the sharpness.
\end{proof}

\subsection{For Quasi-Convex mappings}
   If $f\in \mathcal{H}(\mathbb{B})$ and $(DF(z))^{-1}(D^2 F(z)(z^2)+ DF(z)(z))\in \mathcal{M}_\Psi$, then suitable choices of $\Psi$ lead to various subclasses of holomorphic mappings. For example, selecting $\Psi(z)=(1+z)/(1-z)$ and $\Psi(z)=(1+(1-2 \alpha)z)/(1-z)$ corresponds to the classes $\mathcal{C}_\alpha(\mathbb{B})$ and $\mathcal{C}(\mathbb{B})$, respectively. Consequently, Theorem~\ref{thmB} and Theorem~\ref{ThmUn1} yield the following bounds for the corresponding classes, which extend Corollary~\ref{crl1} and Corollary~\ref{crl2} from the one variable case to several complex variables.

\begin{corollary}
   Let $f\in \mathcal{H}[\mathbb{B},\mathbb{C}]$ and $F(z)=z f(z) \in \mathcal{C}(\mathbb{B})$. Then
\begin{equation*}
\begin{aligned}
   \bigg\vert \bigg(\frac{ l_z (D^3 F(0)(z^3))}{3! \|z\|^3} \bigg)^2 & - \bigg(\frac{ l_z (D^4 F(0)(z^4))}{4! \|z\|^4} \bigg)^2\bigg\vert \leq 2 , \;\; z\in \mathbb{B}\setminus\{0\}  .
\end{aligned}
\end{equation*}
   If $\mathbb{B}=\mathbb{U}^n$ and $X= \mathbb{C}^n$, then
  $$  \bigg\|  \frac{1}{4!}D^4 F(0) \bigg(z^3,  \frac{D^4 F(0)(z^4)}{4!} \bigg)  - \frac{1}{3 !} D^3 F(0) \bigg( z^2, \frac{D^3 F(0)(z^3)}{3!} \bigg)  \bigg\| \leq \|z\|^5 + \|z\|^7 , \; z \in \mathbb{U}^n  . $$
  All these estimates are sharp.
\end{corollary}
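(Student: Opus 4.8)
The plan is to obtain both inequalities as direct specializations of Theorem~\ref{thmB} and Theorem~\ref{ThmUn1} to the choice $\Psi(z)=(1+z)/(1-z)$, which is exactly the generating function for which the membership $(DF(z))^{-1}(D^2F(z)(z^2)+DF(z)(z))\in\mathcal{M}_\Psi$ reduces to $F\in\mathcal{C}(\mathbb{B})$. First I would record the Taylor data of $\Psi$: since $\Psi(z)=1+2z+2z^2+2z^3+\cdots$, one has $\Psi(0)=1$, $\Psi'(0)=2$, $\Psi''(0)=4$ and $\Psi'''(0)=12$. Everything that follows is then a substitution of these four numbers.

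Next I would verify that the two structural hypotheses of the theorems hold for this $\Psi$. For the first, $\lvert\Psi''(0)+2(\Psi'(0))^2\rvert=\lvert 4+8\rvert=12\geq 4=2\Psi'(0)$, so the Fekete--Szeg\H{o} step leading to (\ref{a3BC}) and (\ref{a3UnC}) is legitimate. For the second, I would compute
$$ r_1=\frac{3(\Psi'(0))^2+2\Psi''(0)}{2\Psi'(0)}=5,\qquad r_2=\frac{6(\Psi'(0))^3+9\Psi'(0)\Psi''(0)+2\Psi'''(0)}{12\Psi'(0)}=6, $$
and check membership in $\bigcup_{i=1}^{7}\Theta_i$. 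Since $\lvert r_1\rvert=5\geq 4$ and $r_2=6\geq \tfrac{2}{3}(\lvert r_1\rvert-1)=\tfrac{8}{3}$, the pair $(r_1,r_2)$ lies in $\Theta_7$, so Lemma~\ref{lm3} applies. This membership check is the only point requiring genuine care; the remainder is arithmetic.

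Finally I would substitute the data into the two right-hand sides. In Theorem~\ref{thmB} the first bracket gives $2(\Psi'(0))^2+\Psi''(0)=12$, so the first term equals $144/144=1$; the second bracket gives $(\Psi'(0))^3+\tfrac{3}{2}\Psi'(0)\Psi''(0)+\tfrac{1}{3}\Psi'''(0)=8+12+4=24$, so the second term equals $24^2/576=1$, and the total is $2$, consistent with Corollary~\ref{crl1}. For Theorem~\ref{ThmUn1} the same two brackets produce $\|z\|^7\cdot 24^2/576=\|z\|^7$ and, using $\tfrac12\Psi''(0)/\Psi'(0)+\Psi'(0)=3$ together with $(\Psi'(0))^2=4$, the term $\|z\|^5\cdot 4\cdot 9/36=\|z\|^5$, whence the bound $\|z\|^5+\|z\|^7$. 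Sharpness is inherited without further work: specializing the extremal mappings $F_\Psi$ of the two theorems (with $\Psi(it)=(1+it)/(1-it)$) yields mappings in $\mathcal{C}(\mathbb{B})$ attaining equality, so no separate extremal construction is needed. The main obstacle is confirming the $\Theta_7$ membership recorded above; once that is in place the corollary follows immediately.
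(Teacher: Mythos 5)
Your proposal is correct and follows exactly the route the paper intends: specialize Theorems~\ref{thmB} and~\ref{ThmUn1} to $\Psi(z)=(1+z)/(1-z)$, verify the hypotheses ($|\Psi''(0)+2(\Psi'(0))^2|=12\geq 4$ and $(r_1,r_2)=(5,6)\in\Theta_7$), and substitute $\Psi'(0)=2$, $\Psi''(0)=4$, $\Psi'''(0)=12$ to get the bounds $2$ and $\|z\|^5+\|z\|^7$. In fact your write-up is more explicit than the paper, which states the corollary as an immediate consequence without recording the $\Theta_7$ check.
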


\begin{corollary}
   Let $f\in \mathcal{H}[\mathbb{B},\mathbb{C}]$ and $F(z)=z f(z) \in \mathcal{C}_\alpha(\mathbb{B})$. Then
\begin{equation*}
\begin{aligned}
   \bigg\vert \bigg(\frac{ l_z (D^3 F(0)(z^3))}{3! \|z\|^3} \bigg)^2 & - \bigg(\frac{ l_z (D^4 F(0)(z^4))}{4! \|z\|^4} \bigg)^2\bigg\vert \leq \frac{(1 - \alpha )^2 (3- 2 \alpha)^2}{9}\\
   & \quad\quad\quad\quad\quad+\frac{(1- \alpha)^2 (2- \alpha)^2 (3- 2 \alpha)^2}{36} , \;\; z\in \mathbb{B}\setminus\{0\}  .
\end{aligned}
\end{equation*}
   If $\mathbb{B}=\mathbb{U}^n$ and $X= \mathbb{C}^n$, then
\begin{equation*}
\begin{aligned}
\begin{array}{ll}
     &  \bigg\|  \dfrac{1}{4!}D^4 F(0) \bigg(z^3,  \dfrac{D^4 F(0)(z^4)}{4!} \bigg)  - \dfrac{1}{3 !} D^3 F(0) \bigg( z^2, \dfrac{D^3 F(0)(z^3)}{3!} \bigg)  \bigg\|  \\
   &\quad\quad\quad\quad\leq \dfrac{\|z\|^5(1 -\alpha )^2 (3- 2 \alpha)^2 }{9}+ \dfrac{\|z\|^7(1 -\alpha )^2 (2 -\alpha)^2  ( 3- 2 \alpha)^2 }{36}  , \; z \in \mathbb{U}^n  .
\end{array}
\end{aligned}
\end{equation*}
  All these estimates are sharp.
\end{corollary}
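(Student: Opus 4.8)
The plan is to obtain both inequalities as direct specializations of Theorem~\ref{thmB} and Theorem~\ref{ThmUn1} to the choice
$\Psi(z)=\dfrac{1+(1-2\alpha)z}{1-z}$,
which is the generating function identifying the $\mathcal{M}_\Psi$-condition with membership $F\in\mathcal{C}_\alpha(\mathbb{B})$, exactly as in the one–variable case $\mathcal{C}(\alpha)$ underlying Corollary~\ref{crl2}. The first step is to record the Taylor data of this $\Psi$: expanding $\Psi(z)=1+2(1-\alpha)\sum_{n\geq 1}z^{n}$ yields $\Psi'(0)=2(1-\alpha)$, $\Psi''(0)=4(1-\alpha)$, and $\Psi'''(0)=12(1-\alpha)$.

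Before invoking the theorems I would verify that their two standing hypotheses hold for every $\alpha\in[0,1)$. The first, $|\Psi''(0)+2(\Psi'(0))^{2}|\geq 2\Psi'(0)$, becomes $4(1-\alpha)(3-2\alpha)\geq 4(1-\alpha)$, which is equivalent to $3-2\alpha\geq 1$ and hence immediate. Substituting the data into the formulas of Theorem~\ref{thmB} gives $r_1=5-3\alpha$ and $r_2=(3-2\alpha)(2-\alpha)$, and the membership $(r_1,r_2)\in\bigcup_{i=1}^{7}\Theta_i$ is checked by splitting the parameter range at $\alpha=1/3$: for $\alpha\in[1/3,1)$ one has $r_1\in(2,4]$ and the defining inequality of $\Theta_6$ reduces to $5\alpha^{2}-18\alpha+13\geq 0$, valid on this interval; for $\alpha\in[0,1/3)$ one has $r_1\in(4,5]$ and the defining inequality of $\Theta_7$ reduces to $6\alpha^{2}-15\alpha+10\geq 0$, a quadratic with negative discriminant and therefore positive everywhere.

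With the hypotheses secured, both estimates follow by substitution and factorization. The two quantities in Theorem~\ref{thmB} factor as $2(\Psi'(0))^{2}+\Psi''(0)=4(1-\alpha)(3-2\alpha)$ and $(\Psi'(0))^{3}+\tfrac{3}{2}\Psi'(0)\Psi''(0)+\tfrac13\Psi'''(0)=4(1-\alpha)(3-2\alpha)(2-\alpha)$, which turn the right-hand side of Theorem~\ref{thmB} into $\dfrac{(1-\alpha)^{2}(3-2\alpha)^{2}}{9}+\dfrac{(1-\alpha)^{2}(2-\alpha)^{2}(3-2\alpha)^{2}}{36}$, the asserted Banach-space bound. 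For the polydisk statement I would apply Theorem~\ref{ThmUn1}, using additionally the identity $\tfrac12\tfrac{\Psi''(0)}{\Psi'(0)}+\Psi'(0)=3-2\alpha$; the same two factorizations then convert its right-hand side into the claimed $\|z\|^{5}$ and $\|z\|^{7}$ terms. Sharpness needs no new argument, since the extremal mappings $F_\Psi$ constructed in the proofs of Theorem~\ref{thmB} and Theorem~\ref{ThmUn1} are defined through this very $\Psi$ and already attain equality.

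The only step requiring genuine care is placing $(r_1,r_2)$ inside $\bigcup_{i=1}^{7}\Theta_i$, where one must match the correct region ($\Theta_6$ or $\Theta_7$) to the appropriate subinterval of $\alpha$; once that bookkeeping at $\alpha=1/3$ is handled, the remainder is mechanical algebra. I do not anticipate a substantive obstacle.
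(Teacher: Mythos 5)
Your proposal is correct and follows essentially the same route as the paper: the corollary is obtained by specializing Theorem~4 and Theorem~5 to $\Psi(z)=(1+(1-2\alpha)z)/(1-z)$, computing $\Psi'(0)=2(1-\alpha)$, $\Psi''(0)=4(1-\alpha)$, $\Psi'''(0)=12(1-\alpha)$, and factoring the resulting expressions. The only difference is that you explicitly verify the hypotheses $\lvert\Psi''(0)+2(\Psi'(0))^2\rvert\geq 2\Psi'(0)$ and $(r_1,r_2)=(5-3\alpha,(3-2\alpha)(2-\alpha))\in\Theta_6\cup\Theta_7$, a check the paper leaves implicit; your case split at $\alpha=1/3$ and the quadratics $5\alpha^2-18\alpha+13\geq 0$ and $6\alpha^2-15\alpha+10\geq 0$ are all correct.
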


\section*{Declarations}

\subsection*{Conflict of interest}
	The author declare that he has no conflict of interest.
\subsection*{Data Availability} Not Applicable.

\end{document}